\numberwithin{equation}{section}
\newcommand{\bal}{\begin{align}}
\newcommand{\eal}{\end{align}}
\newcommand{\beq}{\begin{equation}}
\newcommand{\eeq}{\end{equation}}
\newcommand{\bpf}{\begin{proof}}
\newcommand{\epf}{\end{proof}}
\newcommand{\bsp}{\begin{split}}
\newcommand{\esp}{\end{split}}
\newcommand{\bg}{\begin{gathered}}
\newcommand{\eg}{\end{gathered}}
\newtheorem{Thm}{Theorem}[section]
\newtheorem{Lem}{Lemma}[section]
\numberwithin{Thm}{section}
\numberwithin{Lem}{section}
\numberwithin{Cl}{section}
\numberwithin{Cor}{section}
\numberwithin{Def}{section}
\numberwithin{Rem}{section}
\numberwithin{Prop}{section}
\newcommand{\N}{\mathbb{N}}
\newcommand{\R}{\mathbb{R}}
\newcommand{\T}{\mathbb{P}}
\newcommand{\Z}{\mathbb{Z}}
\begin{document}

\title{A Central Limit Theorem for Non-Stationary Strongly Mixing Random Fields\thanks{$^2$ Supported partially by the NSA grant H98230-15-1-0006.}
}

\titlerunning{CLT for Non-Stationary Strongly Mixing Random Fields}        

\author{Richard C. Bradley$^1$  \and
        Cristina Tone$^2$
}


\institute{$^1$Department of Mathematics\\ Indiana University \\ Rawles Hall 313\\ Bloomington, Indiana 47405\\
              \email{bradleyr@indiana.edu}
           \and\\\\
           $^2$Department of Mathematics\\University of Louisville\\ 328 Natural Sciences Building\\ Louisville, Kentucky 40292\\
			  \email{cristina.tone@louisville.edu}
}

\date{Received: date / Accepted: date}

\maketitle

\begin{abstract}
In this paper we extend a central limit theorem of Peligrad for uniformly strong mixing random fields satisfying the Lindeberg condition in the absence of stationarity property. More precisely, we study the asymptotic normality of the partial sums of  uniformly $\alpha$-mixing non-stationary random fields satisfying the Lindeberg condition, in the presence of an extra dependence assumption involving maximal correlations.
\keywords{Central limit theorem\and non-stationary random fields\and strong mixing\and Lindeberg condition\and Kolmogorov's distance.}
\subclass{60F05 \and 60G60}
\end{abstract}

\section{Introduction}\label{s:intro}
In applications of statistics to data indexed by location, there is often an apparent lack of both stationarity and independence, but with a reasonable indication of ``weak dependence" between data whose locations are ``far apart". This has motivated a large amount of research on the theoretical question of to what extent central limit theorems hold for non-stationary random fields. This paper will examine that theoretical question for ``arrays of (non-stationary) random fields" under mixing assumptions analogous to those studied by Peligrad \cite{MP} in central limit theorems for ``arrays of random sequences".

Let $(\Omega, \mathcal{F}, \T)$ be a probability space.
For any two $\sigma$-fields $\mathcal{A}, \ \mathcal{B} \subseteq
\mathcal{F}$, define now the strong mixing coefficient
\begin{equation*}
\alpha(\mathcal{A}, \mathcal{B}):=\sup_{A \in
\mathcal{A}, B \in \mathcal{B}}|P(A\cap B)-P(A)P(B)| \end{equation*} and the
maximal coefficient of correlation
\begin{equation*}
\rho(\mathcal{A}, \mathcal{B}):=\sup|Corr(f, g)|, \text{ } f \in
L^2_{\text{real}}(\mathcal{A}), \text{ } g \in
L^2_{\text{real}}(\mathcal{B}).\end{equation*}
Suppose $d$ is a positive integer and $X:=(X_k, k\in
\Z^d)$ is not necessarily a strictly stationary random field.
In this context, for each positive integer $n$, define the following
quantity:
\begin{equation*}\label{s1.8}
\alpha(X, n):=\sup \alpha(\sigma(X_k, k\in Q), \sigma(X_k, k \in S)),\end{equation*}
where the supremum is taken over all pairs of
nonempty,
disjoint sets $ Q$, $S \subset \Z^d$ with the
following property: There exist $u \in \{1, 2, \ldots, d\}$ and $j \in \Z $ such that
$Q \subset \{k:=(k_1, k_2, \ldots, k_d) \in \Z^d: k_u \leq j \}$ and
$S \subset \{k:=(k_1, k_2, \ldots, k_d) \in \Z^d: k_u \geq j+n \}$.

The random field $X:=(X_k, k\in \Z^d)$ is said to be ``strongly
mixing" (or ``$\alpha$-mixing") if $\alpha(X, n)\rightarrow 0$ as
$n\rightarrow\infty$.

Also, for each positive integer $n$, define the following quantity:
\begin{equation*}\label{s1.9}\rho'(X, n):=\sup \rho(\sigma(X_k, k\in Q), \sigma(X_k, k \in S)),\end{equation*} where the supremum is taken over all pairs of nonempty,
finite disjoint sets $Q$, $S \subset \Z^d$ with the
following property: There exist $u \in \{1, 2, \ldots, d\}$ and nonempty disjoint sets $A$, $B \subset \Z$, with $dist(A, B):=\min_{a\in A, b\in B}|a-b|\geq n$ such that $Q \subset \{k:=(k_1, k_2, \ldots, k_d) \in \Z^d: k_u \in A \}$ and $S \subset \{k:=(k_1, k_2, \ldots, k_d) \in \Z^d: k_u \in B \}$.

The random field $X:=(X_k, k\in \Z^d)$ is said to be
``$\rho'$-mixing" if $\rho'(X, n)\rightarrow 0$ as
$n\rightarrow\infty$.

Again,  suppose $d$ is a positive integer. For a given random field $X:=(X_k, k \in \Z^d)$ and for  each $L:=(L_{1}, L_{2}, \ldots, L_{d}) \in \N^d$, define the ``box"
 \begin{align}\label{box}
 B(L):=\{ k:=(k_1, k_2, \ldots, k_d) \in \N^d: \forall u \in \{1, 2, \ldots, d\}, 1\leq k_u\leq L_u\}.
 \end{align}
 Obviously, the number of elements in the set $B(L)$ is $L_1 \cdot L_2 \cdot \ldots \cdot L_d$.

 For any given $L \in \N^d$ and any given ``collection" $X:=(X_k, k \in B(L))$, the dependence coefficients mentioned above can be defined for $n \in \N$ in the following way for convenience: one can trivially extend that collection $X$ to a random field $\widetilde{X}:=(X_k, k \in \Z^d)$ by defining $X_k=0$ for each $k \in \Z^d-B(L)$, and then one can define the dependence coefficients introduced in the previous section in the following way: for example, for $n \in \N$, $\rho'(X, n):=\rho'(\widetilde{X}, n)$.
 
We are interested in obtaining CLT's for non-stationary strongly mixing random fields, in the presence of an extra condition involving the maximal correlation coefficient $\rho'(X, n)$ defined above. 

Our main result presents a central limit theorem for sequences of random fields that satisfy a Lindeberg condition and uniformly satisfy both strong mixing and an upper bound less than $1$ on $\rho'(\cdot \ , 1)$, in the absence of stationarity.
There is no requirement of either a mixing rate assumption or the existence of moments of order higher than two.
The additional assumption of a uniform upper bound less than $1$ for $\rho'(\cdot \ , 1)$ cannot simply be deleted altogether from the theorem, even in the case of strict stationarity. For the case $d=1$, that can be seen from any
      (finite-variance) strictly stationary, strongly mixing
      counterexample to the CLT such that the rate of
      growth of the variances of the partial sums is at
      least linear; for several such examples,
      see e.g. \cite{Bradley3}, Theorem 10.25 and Chapters 30-33.
Our main theorem and an extension of it, given at the end of the paper, extend certain central limit theorems of Peligrad \cite{MP} involving ``arrays of random sequences".

The main result of this paper will be given in Theorem \ref{r}.
Then the material of this article will be divided as follows: Background results necessary in the proof of the main result will be given in Section 2. Sections 3, 4 and 5 will contain the proof of Theorem \ref{r}. More precisely, Section 3 will set up the induction assumption of the proof and contains two special cases introduced in Lemma \ref{l1}, respectively Lemma \ref{l2}, that imply our result. The general case will be presented in Lemma  \ref{l3}, which covers  Section 4 entirely. Section 5 of the paper will deal with the Lindeberg condition and the truncation argument.  Finally, Section 6 will state an extension of Theorem \ref{r} to a more general setup.

\begin{Thm}\label{r} Suppose $d$ is a positive integer. For each $n \in N$, suppose $L_n:=(L_{n1}, L_{n2}, \ldots, L_{nd})$ is an element of $N^d$, and suppose $X^{(n)}:=\left( X_k^{(n)}, k \in B(L_n) \right)$  is an array of random variables such that for each $k \in B(L_n)$,  $EX_{k}^{(n)}=0$ and  $E \left( X_{k}^{(n)} \right)^2<\infty$. Suppose the following mixing assumptions hold:
\begin{equation}\label{e1} \alpha(m):= \sup_n \alpha(X^{(n)}, m)\rightarrow 0 \text{ as } m\rightarrow\infty \text{ and}
\end{equation}
\begin{equation}\label{e1.1}\rho'(1):= \sup_n \rho'(X^{(n)},1)<1.  \eeq
For each $n \in \N$, define the random sum $S\left(X^{(n)}, L_n \right)=\sum_{k \in B(L_n)} X_k^{(n)}$, define the quantity $\sigma_n^2:=E(S(X^{(n)}, L_n))^2$, 
and assume that $\sigma_n^2>0$. Suppose also that the Lindeberg condition
\begin{equation}\label{e2} \forall \varepsilon>0,\  \lim_{n\rightarrow\infty} \frac{1}{\sigma_n^2}\sum_{k\in B(L_n)} E \left( X_{k}^{(n)} \right)^2I \left( \left | X_k^{(n)}\right|>\varepsilon\sigma_n\right)=0
 \end{equation} holds.
Then \[\sigma_n^{-1}S(X^{(n)}, L_n)\Rightarrow
N(0, 1) \text{ as } n\rightarrow\infty.\] 
\end{Thm}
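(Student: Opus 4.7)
The strategy is to combine a truncation-and-centering reduction with an inductive Bernstein big-block/small-block decomposition performed coordinate-by-coordinate.

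First, I would reduce to a uniformly bounded array. Fix $\varepsilon_n \downarrow 0$ slowly enough that the Lindeberg condition (\ref{e2}) still yields $\sigma_n^{-2}\sum_{k\in B(L_n)} E(X_k^{(n)})^2 I(|X_k^{(n)}|>\varepsilon_n\sigma_n) \to 0$. Set $Y_k^{(n)} := X_k^{(n)}I(|X_k^{(n)}|\leq \varepsilon_n\sigma_n) - E[X_k^{(n)}I(|X_k^{(n)}|\leq\varepsilon_n\sigma_n)]$. Since truncation and centering act pointwise on each variable, one checks that (a) $\sigma_n^{-1}\sum_k(X_k^{(n)}-Y_k^{(n)}) \to 0$ in $L^2$, (b) the truncated array $Y^{(n)}$ inherits the mixing assumptions (\ref{e1}) and (\ref{e1.1}), and (c) $\mathrm{Var}(\sum_k Y_k^{(n)})/\sigma_n^2 \to 1$. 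Hence it suffices to prove the CLT for an array satisfying $\|Y_k^{(n)}\|_\infty \leq \varepsilon_n\sigma_n$ with $\varepsilon_n\to 0$.

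With the array uniformly bounded, I would treat the $d$-dimensional sum by inducting on the coordinate directions, slicing $B(L_n)$ along one coordinate at a time into alternating ``big'' slabs of width $p_n$ and ``small'' separating slabs of width $q_n$, with $p_n,q_n\to\infty$, $q_n/p_n\to 0$, and $\alpha(q_n)\to 0$. Writing $U_i^{(n)}$ and $V_i^{(n)}$ for the sums over the $i$-th big and $i$-th small slab, the $\alpha$-mixing hypothesis (\ref{e1}) makes the $U_i^{(n)}$'s asymptotically independent, while the $\rho'(\cdot,1)<1$ hypothesis (\ref{e1.1}) supplies a constant $C$ such that $\mathrm{Var}(\sum_i V_i^{(n)}) \leq C\sum_i \mathrm{Var}(V_i^{(n)})$. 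Since the small slabs occupy a vanishing fraction of $B(L_n)$, their total contribution is negligible in $L^2$, and the problem reduces to a CLT for a sum of approximately independent slab sums, each of which is treated by the inductive hypothesis once its effective dimensionality is lowered. The special cases in Lemmas \ref{l1} and \ref{l2} presumably initialize this induction (for instance, by handling geometrically degenerate boxes in which all but one coordinate has bounded extent), and Lemma \ref{l3} then performs the general inductive step. A final Lindeberg--Feller CLT applied to the approximately independent big-slab sums, after a standard coupling to truly independent copies driven by $\alpha(q_n)\to 0$, closes the argument.

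The main obstacle will be the uniform variance control in the non-stationary setting: without translation invariance one cannot simply average local variances, so every covariance and variance estimate must be tracked through the correct supremum or infimum over the relevant regions. The $\rho'(\cdot,1)<1$ hypothesis has to be used in the form of a uniform inequality $\mathrm{Var}(\sum_i Z_i) \leq C\sum_i \mathrm{Var}(Z_i)$ for families $(Z_i)$ measurable with respect to sufficiently separated index sets, and this must be combined with $\alpha$-mixing covariance estimates to prove both that $\mathrm{Var}(\sum_i U_i^{(n)})/\sigma_n^2 \to 1$ and that the sequence $(U_i^{(n)})_i$ inherits a Lindeberg-type condition. A secondary subtlety is coordinating the block-width parameters $p_n, q_n$ simultaneously across all $d$ coordinates so that a blockwise Lindeberg condition survives each stage of the induction and so that the block count along every axis diverges fast enough to produce a Gaussian limit.
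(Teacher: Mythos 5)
Your truncation step is essentially the paper's Section 5 and is fine: one truncates at a level $\varepsilon_n\sigma_n$ with $\varepsilon_n\downarrow 0$ chosen via the Lindeberg condition, and the discarded part is $o(1)$ in $L^2$ by the variance inequality that $\rho'(\cdot,1)<1$ provides (Theorem \ref{br1}). The gap is in the core of your argument. A Bernstein big-block/small-block scheme along one coordinate, with block widths $p_n\to\infty$ chosen uniformly, followed by a Lindeberg--Feller CLT for the big-block sums, does not work here, for two reasons. First, in the absence of stationarity nothing prevents the variance from concentrating on a bounded number of slices $\{k:k_1=j\}$: the individual-variable Lindeberg condition \eqref{e2} forces $\max_k E(X_k^{(n)})^2=o(\sigma_n^2)$, but a single slice sum $Y_j^{(n)}$ can still carry a fixed positive fraction of $\sigma_n^2$. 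In that case one big block dominates and the blockwise Lindeberg condition you invoke at the end simply fails. Second, your claim that each big slab is ``treated by the inductive hypothesis once its effective dimensionality is lowered'' is unjustified: a slab of width $p_n\to\infty$ in the first coordinate is still a genuinely $d$-dimensional box, so the $(d-1)$-dimensional induction hypothesis does not apply to it (and applying the theorem being proved to it would be circular). Only slabs of \emph{bounded} width in the first coordinate collapse to dimension $d-1$, which is exactly the content of Lemma \ref{l1}. A smaller but real defect: to replace the block sums by independent copies you need (number of blocks)$\,\times\,\alpha(q_n)\to 0$, not merely $\alpha(q_n)\to 0$, since the relevant coupling bound (Theorem 25.56 of \cite{Bradley3}) accumulates one $\alpha$ term per block.

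The paper's Lemma \ref{l3} is designed precisely to get around the concentration problem, and its structure is quite different from a uniform blocking. One passes to a subsequence, sorts the slices by decreasing variance, and uses a diagonal argument to make the top slice variances converge to limits $\lambda_1\ge\lambda_2\ge\cdots$. If $\sum_j\lambda_j=0$ the slice variances are uniformly negligible and Peligrad's one-dimensional theorem applies to the array of slice sums (Lemma \ref{l2}, whose Lyapounov verification uses the Rosenthal inequality, Theorem \ref{br2}). Otherwise the index set $\{1,\dots,L_1^{(n_q)}\}$ is split into a \emph{finite} heavy part $\Gamma_1^{(q)}$ (handled by collapsing it into a $(d-1)$-dimensional field via Lemma \ref{l1}), a light part $\Gamma_4^{(q)}$ with uniformly small slice variances (handled by Lemma \ref{l2}), and two buffer/transition sets shown to be negligible in $L^2$; an alternating-blocks coupling driven by $q\,\alpha(m_q)\to 0$ then combines the two Gaussian limits, and a uniform fourth-moment bound identifies the limiting variance. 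If you want to salvage your approach, you would need to build this heavy/light dichotomy into your block selection; without it the argument does not close.
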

(Here and throughout the paper $\Rightarrow$ denotes convergence in distribution.)

This result extends a theorem of Peligrad (see \cite{MP}, Theorem 2.2), which is Theorem \ref{r} for the case $d=1$. Later on, Peligrad and Utev \cite{UP} obtained an invariance principle for random elements associated to sums of strongly mixing triangular arrays of random variables associated with the interlaced mixing coefficients $\rho^*_n$. Their invariance principle generalizes the corresponding results for independent random variables treated e.g. by Prohorov \cite{Pro}. For the strictly stationary case see Peligrad \cite{Pelig2}. 
 
For a sequence of strictly stationary random fields that are uniformly $\rho'$-mixing and satisfy a Lindeberg condition, a central limit theorem is obtained  in \cite{Tone4} for sequences of ``rectangular" sums from the given random fields. The ``Lindeberg CLT" is then used to prove a CLT for some kernel estimators of probability density for some strictly stationary random fields satisfying $\rho'$-mixing, and whose probability density and joint densities are absolutely continuous, generalizing the results in \cite{Mil}, under $\rho^*$-mixing.

\section{Background Results}

The proof of Theorem \ref{r} uses frequently the following results. The first one is a consequence of Theorem 28.10(I) \cite{Bradley3}
which gives an upper bound for the variance of partial sums.

\begin{Thm}\label{br0}
Suppose $d$ is a positive integer, $L \in \N^d$, and $X:=\left( X_k, k \in B(L) \right)$  is a (not necessarily strictly stationary) random field such that for each $k\in B(L)$,  the random variable $ X_k$ has mean zero and finite second moments. Suppose $\rho'(X, j)<1$ for some $j \in \N$. Then 
for any nonempty finite set $S \subseteq B(L)$,
\beq\label{br0.1} E  \left | \sum_{k \in S} X_k  \right |^2
\leq C \sum_{k \in S} E \left (X_k  \right )^2, \eeq where $C:=j^d \left (1+ \rho'(X, j) \right)^d/\left (1- \rho'(X, j) \right )^d.$
\end{Thm}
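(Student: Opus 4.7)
My plan is to prove the bound by induction on the dimension $d$, with Theorem 28.10(I) of \cite{Bradley3} providing the one-dimensional base case and the inductive step being a ``slicing'' argument along a single coordinate axis.

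Write $\rho := \rho'(X, j) < 1$. For $d = 1$, Theorem 28.10(I) of \cite{Bradley3} directly yields \eqref{br0.1} with $C = j(1+\rho)/(1-\rho)$. For the inductive step with $d \geq 2$, assume the result in dimension $d-1$ with constant $C_{d-1} = j^{d-1}(1+\rho)^{d-1}/(1-\rho)^{d-1}$. Given a finite $S \subseteq B(L)$, slice $S$ by its first coordinate: for each integer $m$ let $S_m := \{k' \in \Z^{d-1} : (m, k') \in S\}$ and set
\beq
Z_m := \sum_{k' \in S_m} X_{(m, k')}.
\eeq
Each $Z_m$ is measurable with respect to $\sigma(X_k : k_1 = m)$, and hence the one-dimensional sequence $(Z_m)_{m \in \Z}$ inherits $\rho'(Z, j) \leq \rho'(X, j) = \rho$ (see the final paragraph). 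Applying the base case to $(Z_m)$ yields
\beq
E\Big|\sum_{m} Z_m\Big|^2 \leq \frac{j(1+\rho)}{1-\rho} \sum_{m} E Z_m^2.
\eeq
For each fixed $m$, the $(d-1)$-dimensional sub-collection $\bigl(X_{(m, k')}\bigr)_{k' \in S_m}$ still satisfies $\rho'(\cdot, j) \leq \rho$, so by the inductive hypothesis
\beq
E Z_m^2 \leq C_{d-1} \sum_{k' \in S_m} E \bigl(X_{(m, k')}\bigr)^2.
\eeq
Combining the two displays and summing over $m$ produces \eqref{br0.1} with the advertised constant $C = j^d(1+\rho)^d/(1-\rho)^d$, closing the induction.

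The only delicate point is verifying the inheritance $\rho'(Z, j) \leq \rho'(X, j)$ under the slicing operation. This follows from the definition of $\rho'(X, j)$, which allows the distinguished coordinate direction to be any of the $d$ axes, together with the observation that $\sigma(Z_m : m \in A) \subseteq \sigma(X_k : k_1 \in A)$ for every $A \subseteq \Z$, so that any pair of sets of slice-sums separated by distance at least $j$ in the one-dimensional index corresponds to a pair of sub-$\sigma$-algebras in $\sigma(X)$ whose index sets are separated by distance at least $j$ in the first coordinate of $\Z^d$. Beyond this routine monotonicity check, the argument is essentially a double application of a one-dimensional $\rho'$-mixing variance bound, so I do not anticipate any serious obstacles.
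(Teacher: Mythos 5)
Your argument is correct. For the record, the paper itself gives no proof of this statement: it is presented as a background fact and attributed directly to Theorem 28.10(I) of \cite{Bradley3}, which is already stated there for random fields indexed by $\Z^d$ with precisely the constant $j^d(1+\rho)^d/(1-\rho)^d$. So what you have done is reconstruct the $d$-dimensional bound from the one-dimensional case by induction on the dimension, which is a legitimate alternative (and is in the same spirit as how such ``power of $d$'' constants arise in the source). The two load-bearing points are both handled adequately: (i) the inheritance $\rho'(Z,j)\leq\rho'(X,j)$ for the slice sums follows, as you say, from $\sigma(Z_m: m\in Q)\subseteq\sigma(X_k: k_1\in Q)$ together with the monotonicity of $\rho(\cdot,\cdot)$ under passing to sub-$\sigma$-fields, and from the fact that the definition of $\rho'(X,n)$ permits separation along any single coordinate axis (so coordinate $1$ for the outer application and coordinates $2,\dots,d$ for the inner, $(d-1)$-dimensional one); (ii) the constants compose multiplicatively because $r\mapsto(1+r)/(1-r)$ is increasing on $[0,1)$, so replacing $\rho'(Z,j)$ and the sliced field's coefficient by the larger $\rho'(X,j)$ only weakens the bound. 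The only cosmetic caveat is that empty slices $S_m$ should simply be discarded (their $Z_m$ vanish), so that the base case, which requires a nonempty index set, applies to the remaining finite set of $m$; this is trivial. Note also that your induction still rests on the cited reference for the $d=1$ base case, so the proof is not fully self-contained, but that is consistent with how the paper treats this result.
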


The second result is a consequence of Theorem 28.9 \cite{Bradley3}
which gives lower and upper bounds for the variance of partial sums.

\begin{Thm}\label{br1}
Suppose $d$ is a positive integer, $L \in \N^d$, and $X:=\left( X_k, k \in B(L) \right)$  is a (not necessarily strictly stationary) random field such that for each $k\in B(L)$,  the random variable $ X_k$ has mean zero and finite second moments. Suppose $\rho'(X,1)<1$. Then for any nonempty finite set $S \subseteq B(L)$,
\begin{equation}\label{bvs}
C^{-1} \sum_{k \in S} E  \left | X_k  \right |^2 \leq E \left| \sum_{k \in S} X_k \right|^2 \leq C \sum_{k \in S}E  \left |X_k \right |^2,
\end{equation}
$ \text{where }C:=(1+ \rho'(X, 1))^d / (1- \rho'(X, 1))^d$.
\end{Thm}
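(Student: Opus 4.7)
The natural plan is induction on the dimension $d$. The base case $d=1$ is the content of the cited inequality from Bradley \cite{Bradley3} (Theorem 28.9): for a mean-zero sequence $(X_k)_{k\in\Z}$ of $L^2$ random variables with $\rho'(X,1)\le\rho<1$ (where, for $d=1$, the ``slab'' restriction in the definition of $\rho'$ is vacuous, so the hypothesis just says the maximal correlation between the $\sigma$-algebras generated by any two disjoint finite index sets is at most $\rho$), one has
\[
\frac{1-\rho}{1+\rho}\sum_{k\in S}EX_k^2\ \le\ E\Big(\sum_{k\in S}X_k\Big)^2\ \le\ \frac{1+\rho}{1-\rho}\sum_{k\in S}EX_k^2
\]
for every nonempty finite $S\subseteq\Z$. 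I would take this one-dimensional two-sided bound as given; its proof works directly with the Gram matrix of the $X_k$, exploiting that $\rho'(X,1)$ controls the covariance between \emph{arbitrary} linear combinations supported on disjoint index sets, not merely pairs of individual variables.

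For the induction step, assume the bound in dimension $d-1$ with constant $C_{d-1}:=((1+\rho)/(1-\rho))^{d-1}$, and take a nonempty finite $S\subseteq B(L)\subset\Z^d$. I would project onto the last coordinate: for each $j\in\Z$ set $S_j:=\{k\in S:k_d=j\}$, canonically identified with a finite subset of $\Z^{d-1}$, and $Y_j:=\sum_{k\in S_j}X_k$, so that $\sum_{k\in S}X_k=\sum_j Y_j$. The restriction of $X$ to a single slice $\{k_d=j\}$ is a $(d-1)$-dimensional collection whose $\rho'$-coefficient at gap one is still at most $\rho$ (passing to a sub-collection only shrinks the sup defining $\rho'$), so the induction hypothesis applied slice-by-slice yields
\[
C_{d-1}^{-1}\sum_{k\in S_j}EX_k^2\ \le\ EY_j^2\ \le\ C_{d-1}\sum_{k\in S_j}EX_k^2
\]
for each $j$.

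The main --- and really the only delicate --- point is to verify that the one-dimensional collection $(Y_j)_{j\in\Z}$ inherits the $\rho'$-bound $\rho$ at gap one, so that the $d=1$ base case can be applied to it. For any two disjoint finite $A,B\subset\Z$, the $\sigma$-algebras $\sigma(Y_j:j\in A)$ and $\sigma(Y_j:j\in B)$ lie inside $\sigma(X_k:k_d\in A)$ and $\sigma(X_k:k_d\in B)$ respectively, and these slab-type $\sigma$-algebras (with distinguished direction $u=d$ and trivially separated indices, since disjoint integer sets have distance at least one) are precisely those whose maximal correlation is bounded by $\rho'(X,1)=\rho$; this is the exact role the slab structure plays in the definition of $\rho'$. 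Applying the $d=1$ base case to $(Y_j)$ then gives
\[
\frac{1-\rho}{1+\rho}\sum_j EY_j^2\ \le\ E\Big(\sum_j Y_j\Big)^2\ \le\ \frac{1+\rho}{1-\rho}\sum_j EY_j^2,
\]
and chaining with the per-slice bounds above produces the claimed two-sided inequality with constant $C_d=((1+\rho)/(1-\rho))^d$, completing the induction.
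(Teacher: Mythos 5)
The paper does not actually prove this statement: Theorem \ref{br1} is imported as a background result, quoted directly as a consequence of Theorem 28.9 of Bradley's book, so there is no in-paper argument to compare against. Your proposal supplies a genuine proof, and it is correct. The induction on $d$ with slicing along the last coordinate is sound, and you correctly isolated the one delicate point: that the one-dimensional collection $(Y_j)$ inherits the gap-one bound $\rho'(Y,1)\le\rho'(X,1)$. Your justification of that point is right, because $\sigma(Y_j:j\in A)\subseteq\sigma(X_k:k_d\in A)$ and, in the definition of $\rho'(X,1)$, the choice $u=d$ together with arbitrary disjoint $A,B\subset\Z$ (automatically at distance $\ge 1$) is exactly one of the admissible separated pairs; likewise, restricting $X$ to a single slice only shrinks the supremum defining the $(d-1)$-dimensional coefficient, so the induction hypothesis applies slice-by-slice with the same $\rho$. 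Chaining the per-slice two-sided bounds (constant $((1+\rho)/(1-\rho))^{d-1}$) with the one-dimensional bound for $(Y_j)$ (constant $(1+\rho)/(1-\rho)$) gives the stated constant $C=((1+\rho)/(1-\rho))^d$ on both sides. This is essentially the standard derivation of the $d$-dimensional variance inequality from the $d=1$ case (and is in the spirit of how the cited reference proves it, and of how the paper itself uses slicing in Lemmas \ref{l1}--\ref{l3}), so while it does not match a proof in the paper, it is a legitimate and self-contained substitute modulo the one-dimensional base case, which you are entitled to take as given since it is the $d=1$ instance of the cited theorem.
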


The  next  result used is a particular case of the Rosenthal inequality (see Theorem 29.30, \cite{Bradley3}) for the exponent 4.

\begin{Thm}\label{br2}
Suppose $d$ and $m$ are each a positive integer and $r \in [0, 1)$. Then there exists a constant $C:=C(d, 4, r, m)$ such that the following holds: 

Suppose $L \in \N^d$ and  $X:=\left( X_k, k \in B(L) \right)$ is a (not necessarily strictly stationary) random field such that for each $k \in B(L)$, $EX_k=0$ and $E|X_k|^4<\infty$,  and $\rho'(X, m)\leq r$. Then for any nonempty finite set $S \subseteq B(L)$, one has that
\begin{equation}\label{ros}
 E \left| \sum_{k \in S} X_k\right|^4 \leq C \cdot \left[ \sum_{k \in S}E  \left |X_k \right |^4+\left( \sum_{k \in S}E  \left |X_k\right |^2 \right)^2 \right].
\end{equation}
\end{Thm}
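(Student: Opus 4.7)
The plan is to reduce to a one-dimensional Rosenthal-type inequality and then lift to higher dimensions by induction on $d$ via a slicing argument. As a preliminary reduction, extend $X$ to all of $B(L)$ by setting $X_k := 0$ for $k \in B(L)\setminus S$; this preserves the mean, fourth moments, and the $\rho'$-mixing coefficient at lag $m$, so we may assume $S = B(L)$ throughout.

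\textbf{Base case $d = 1$.} The one-dimensional statement is: for any mean-zero sequence $X_1,\dots,X_N$ with fourth moments and $\rho'(X,m)\le r$, the inequality \eqref{ros} holds. I would prove this by strong induction on $N$. Split $\{1,\dots,N\}$ into three consecutive intervals $L', M, R$ with $|L'|,|R|\le N/2$ and $|M|=m$; put $U:=\sum_{i\in L'}X_i$, $W:=\sum_{i\in M}X_i$, $V:=\sum_{i\in R}X_i$, and expand $E|U+W+V|^4$. The pure fourth-moment terms $E|U|^4$ and $E|V|^4$ are handled by the inductive hypothesis on the halves; the gap term $W$, consisting of only $m$ variables, is handled directly by H\"older combined with Theorem~\ref{br0}; and the cross terms $E[U^2V^2]$, $E[U^3V]$, $E[UV^3]$ are controlled via the $\rho$-mixing covariance bound
\[
|\mathrm{Cov}(f(U), g(V))| \le r\sqrt{\mathrm{Var}(f(U))\,\mathrm{Var}(g(V))},
\]
which is available since the sets $L'$ and $R$ are separated by $m$. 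For instance $E[U^2V^2] \le EU^2\cdot EV^2 + r\sqrt{EU^4\cdot EV^4}$, and asymmetric cross terms like $E[U^3V]$ are then dominated by Cauchy--Schwarz, $|E[U^3V]|\le (EU^4)^{1/2}(E[U^2V^2])^{1/2}$, so that moments beyond the fourth are never needed. Choosing the constant $C=C(1,4,r,m)$ sufficiently large (depending on $r$ and $m$) closes the recursion.

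\textbf{Inductive step $d\ge 2$.} Assume the statement in dimension $d-1$. For each $j\in\{1,\dots,L_{d}\}$ define the slice-sum $Y_j:=\sum_{k\in B(L),\,k_d=j}X_k$. The sequence $(Y_j)_{j=1}^{L_d}$ is a one-dimensional random field, and it satisfies $\rho'(Y,m)\le r$: any two disjoint subsets of $\{1,\dots,L_d\}$ at distance $\ge m$ correspond to subsets of $B(L)$ whose $d$-th coordinates are separated by $m$, so the $\rho'$-mixing assumption on $X$ applies. Applying the $d=1$ case to $(Y_j)$ yields
\[
E\Bigl|\sum_{j}Y_j\Bigr|^4 \le C_1\left[\sum_j E|Y_j|^4 + \Bigl(\sum_j EY_j^2\Bigr)^2\right].
\]
Next, apply the inductive hypothesis to each $(d-1)$-dimensional slice $(X_k)_{k_d=j}$ to bound $E|Y_j|^4$ in terms of $\sum_{k_d=j}E|X_k|^4$ and $(\sum_{k_d=j}E|X_k|^2)^2$; Theorem~\ref{br0} bounds $E|Y_j|^2$ by a constant times $\sum_{k_d=j}E|X_k|^2$. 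Summing over $j$ and using the elementary inequality $\sum_j a_j^2\le(\sum_j a_j)^2$ for nonnegative $a_j$ collapses everything to $\sum_{k\in B(L)}E|X_k|^4 + (\sum_{k\in B(L)}E|X_k|^2)^2$, yielding the bound with constant $C(d,4,r,m)$.

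\textbf{Main obstacle.} The technically most delicate step is the one-dimensional base case, specifically the asymmetric cross term $E[U^3V]$: the $\rho$-mixing bound naturally produces variances of powers, so a direct application would require sixth moments of $U$. The Cauchy--Schwarz reduction described above sidesteps this, but it must be organized so that the resulting bound still fits into the induction, i.e., so that the coefficients of $\sum E|X_i|^4$ and $(\sum EX_i^2)^2$ appearing on the right stay bounded as $N\to\infty$. Once those coefficients are controlled (via a calibration that depends only on $r$ and $m$), the rest of the argument --- the slicing and the $(d-1)$-dimensional reduction --- is routine.
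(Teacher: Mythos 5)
The paper does not actually prove Theorem~\ref{br2}: it is quoted as a special case of the Rosenthal inequality, Theorem 29.30 of \cite{Bradley3}, so there is no in-paper argument to compare yours against. Your preliminary reduction to $S=B(L)$ and your induction on the dimension $d$ via slicing are both sound (the slice sums $Y_j$ do inherit $\rho'(\cdot\,,m)\le r$, and $\sum_j a_j^2\le(\sum_j a_j)^2$ collapses the slice bounds correctly). The problem is the base case $d=1$, which is where essentially all of the content of the theorem lies.

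The gap: your dyadic recursion does not close, and enlarging $C$ cannot fix it. In the expansion of $E(U+W+V)^4$, the term $6E[U^2V^2]$ is bounded, exactly as you propose, by $6EU^2EV^2+6r\sqrt{EU^4EV^4}\le 6EU^2EV^2+3r(EU^4+EV^4)$. Together with the pure terms $EU^4+EV^4$, the right-hand side is already at least $(1+3r)(EU^4+EV^4)$, and the inductive hypothesis gives $EU^4+EV^4\le C\bigl(\sum_{i\in L'}E|X_i|^4+\sum_{i\in R}E|X_i|^4\bigr)+\ldots=C\sum_i E|X_i|^4+\ldots$. So the coefficient of $\sum_i E|X_i|^4$ gets multiplied by at least $1+3r>1$ at every dyadic level; after $\log_2 N$ levels it is of order $N^{\log_2(1+3r)}$, unbounded in $N$. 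No choice of $C$ can terminate a strictly supermultiplicative recursion, so the sentence ``choosing the constant $C$ sufficiently large closes the recursion'' is false as stated. (The $4|E[U^3V]|$ terms produce the same defect after your Cauchy--Schwarz step, via the piece $r^{1/2}(EU^4)^{3/4}(EV^4)^{1/4}$.) Note also that with only $\rho'(m)\le r$ for a single $m$ and $r$ arbitrarily close to $1$, the per-level loss $3r$ cannot be made small by widening the gap unless one invokes the submultiplicativity of the interlaced coefficient (roughly $\rho'(X,jm)\le(\rho'(X,m))^j$), which your argument never uses. The known proofs (Peligrad--Utev; Bradley's Theorem 29.30) require genuinely more structure: for instance, one first proves the inequality in a restricted (truncated) class where the right-hand side is a constant times $(\sum EX_i^2)^2$ alone --- there a variance-balanced split makes the main term contract by the factor $2^{1-p/2}=1/2$ per level, which can absorb a loss made small via submultiplicativity --- and only afterwards recovers the $\sum E|X_i|^4$ term by a separate truncation step. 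Your ``main obstacle'' paragraph correctly senses that the coefficients must stay bounded, but it identifies the wrong term ($E[U^3V]$) as the culprit and supplies no mechanism for the real one, $E[U^2V^2]$.
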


\section{Induction Assumption}
The proof of Theorem \ref{r} will be done by induction on $d$. For $d=1$, Theorem \ref{r} was proved by Peligrad (\cite{MP}, Theorem 2.2). Now suppose $d$ is an integer such that $d \geq 2$. As the induction hypothesis, suppose Theorem \ref{r} holds in the case where $d$ is replaced by the particular integer $d-1$. To complete the induction step (and thereby the proof of Theorem \ref{r}), it suffices to prove Theorem \ref{r} in the case of the given integer $d$.

 To carry out the induction step, we will first treat the case where 
\beq\label{inf} \inf_{n\in \N} \sigma_n^2>0
\eeq and
\beq\label{strong}
\theta_n:=\sup_{k \in B(L_n)} \left\| X_k^{(n)} \right\|_{\infty} \rightarrow 0.
\eeq
Notice that \eqref{strong} (together with \eqref{inf}) implies the Lindeberg condition \eqref{e2}.
Our goal in Sections 3 and 4 is to show that for $X^{(n)}:=\left( X_k^{(n)}, k \in B(L_n) \right)$ satisfying \eqref{e1}, \eqref{e1.1}, \eqref{inf}, and \eqref{strong}, the CLT holds, that is 
\beq\label{goal}
 \frac{1}{\sigma_n}\sum_{k \in B(L_n)} X_k^{(n)} \Rightarrow
N(0, 1) \text{ as } n\rightarrow\infty.
\eeq
Then in Section 5, the induction argument will be completed with the use of a standard truncation argument to reduce to the case of the restrictions \eqref{inf}-\eqref{strong}.

In what follows, for convenience, we shall use the notation $L_n:=L^{(n)}:=\left( L_1^{(n)},  L_2^{(n)}, \ldots,  L_d^{(n)} \right)$.
\begin{Lem}\label{l1}
Suppose in addition to the properties  \eqref{e1}, \eqref{e1.1}, \eqref{inf}, and \eqref{strong} that $\sup_{n \in \N} L_1^{(n)}<\infty$.  
For each $n \geq 1$, define the element $ \widetilde{L}^{(n)} \in \N^{d-1}$ by $\widetilde{L}^{(n)}:=\left( L_2^{(n)},  L_3^{(n)}, \ldots,  L_d^{(n)} \right)$. For each $n \geq 1$, define the random field $W^{(n)}:=\left( W^{(n)}_k, k \in B(\widetilde{L}^{(n)}) \right)$ as follows:
For each $k:=(k_2, k_3, \ldots, k_d) \in B(\widetilde{L}^{(n)})$,
\[ W_{k}^{(n)}:=\sum_{u \in \left\{ 1, 2,  \ldots, L_1^{(n)}  \right \} }X^{(n)}_{ \left( u, \ k\right) }.
\] 
Then 
\[ \frac{1}{\sigma_n}\sum_{k \in B(L^{(n)})} X_k^{(n)}= \left( E \left( \sum_{k \in B(\widetilde{L}^{(n)})} W_k^{(n)} \right)^{2} \right)^{-1/2} \sum_{k \in B(\widetilde{L}^{(n)})} W_k^{(n)}  \Rightarrow
N(0, 1) \]
$ \text{as } n\rightarrow\infty$.
\end{Lem}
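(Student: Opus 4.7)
The plan is to apply the induction hypothesis, i.e., Theorem \ref{r} in dimension $d-1$, to the $(d-1)$-dimensional array $W^{(n)}$. The first observation is the elementary identity
\[
\sum_{k \in B(\widetilde{L}^{(n)})} W_k^{(n)} \;=\; \sum_{k \in B(L^{(n)})} X_k^{(n)},
\]
obtained by interchanging the order of summation. Hence the two random sums appearing in the statement are literally the same random variable, and in particular $E\bigl(\sum_k W_k^{(n)}\bigr)^2 = \sigma_n^2$. The equality asserted in the lemma is therefore immediate, and it remains only to apply the $(d-1)$-dimensional CLT to $\sigma_n^{-1}\sum_k W_k^{(n)}$.

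To invoke the induction hypothesis on $(W^{(n)})_{n \in \N}$, I need to verify that $W^{(n)}$ satisfies all hypotheses of Theorem \ref{r} in dimension $d-1$ with variance normalization $\sigma_n^2$. The mean-zero and square-integrability conditions are immediate from linearity. The heart of the argument is the mixing transfer: given nonempty disjoint $Q, S \subset \Z^{d-1}$ realizing the separation configuration required by the definition of either $\alpha(W^{(n)}, m)$ or $\rho'(W^{(n)}, 1)$ along some coordinate $u' \in \{1, \ldots, d-1\}$, the $\sigma$-algebras $\sigma(W_k^{(n)}, k \in Q)$ and $\sigma(W_k^{(n)}, k \in S)$ are contained respectively in $\sigma(X_j^{(n)}, j \in \widetilde{Q})$ and $\sigma(X_j^{(n)}, j \in \widetilde{S})$, where $\widetilde{Q}, \widetilde{S} \subset B(L^{(n)})$ are the preimages under the projection $(k_1, k_2, \ldots, k_d) \mapsto (k_2, \ldots, k_d)$. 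These preimages preserve the same type of separation, now along coordinate $u' + 1$ of $\Z^d$. Hence $\alpha(W^{(n)}, m) \leq \alpha(X^{(n)}, m)$ and $\rho'(W^{(n)}, 1) \leq \rho'(X^{(n)}, 1)$, so \eqref{e1} and \eqref{e1.1} transfer verbatim to $W^{(n)}$.

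The Lindeberg condition \eqref{e2} for $W^{(n)}$ is essentially automatic under \eqref{inf} and \eqref{strong}: writing $C := \sup_n L_1^{(n)} < \infty$, one has $\left|W_k^{(n)}\right| \leq C \theta_n$ almost surely, while $\sigma_n$ is bounded below by \eqref{inf} and $\theta_n \to 0$ by \eqref{strong}, so for each $\varepsilon > 0$ and all sufficiently large $n$ the event $\{|W_k^{(n)}| > \varepsilon \sigma_n\}$ is empty for every $k \in B(\widetilde{L}^{(n)})$, and the Lindeberg sum is identically zero. The induction hypothesis then applies to $W^{(n)}$ and yields the claimed convergence. The only real subtlety I anticipate is the preimage/projection argument for the mixing coefficients --- checking that both the ``half-space'' separation in the definition of $\alpha$ and the coordinate-value separation in the definition of $\rho'$ lift correctly from $\Z^{d-1}$ to $\Z^d$ --- after which everything else is routine bookkeeping.
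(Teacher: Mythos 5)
Your proposal is correct and follows essentially the same route as the paper's proof: identify the two sums, note that $W^{(n)}$ inherits the mixing coefficients from $X^{(n)}$ (which the paper asserts and you usefully spell out via the projection/preimage argument), bound $\sup_k\|W_k^{(n)}\|_\infty \le (\sup_n L_1^{(n)})\,\theta_n \to 0$ so that the Lindeberg condition holds trivially under \eqref{inf}, and invoke the induction hypothesis in dimension $d-1$.
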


\begin{proof} It is easy to see that \[E \left( \sum_{k \in B(\widetilde{L}^{(n)}) }W_k^{(n)} \right)^{2} =E \left( \sum_{k \in B(\widetilde{L}^{(n)}) }\sum_{u=1}^{L_1^{(n)}} 
X_{(u,k)}^{(n)} \right)^{2}=\sigma_n^2.
\]
The random field $ W^{(n)}$ inherits the properties from the parent random field $ X^{(n)}$, that is, the mixing and the moment properties. In addition, 
\begin{align*} \bsp & \sup_{k \in B(\widetilde{L}^{(n)}) } \| W_k^{(n)}\|_{\infty}= \sup_{k \in B(\widetilde{L}^{(n)}) }  \| \sum_{u=1}^{L_1^{(n)}}  X_{(u,k)}^{(n)}  \|_{\infty} \leq \sup_{k \in  B(\widetilde{L}^{(n)}) }  \sum_{u=1}^{L_1^{(n)}}   \| X_{(u,k)}^{(n)}  \|_{\infty} \\
& \leq    \sum_{u=1}^{L_1^{(n)}} \sup_{k \in  B(\widetilde{L}^{(n)}) }  \| X_{(u,k)}^{(n)}  \|_{\infty} 
\leq  \sum_{u=1}^{L_1^{(n)}} \sup_{k \in B(L^{(n)} )}  \| X_k^{(n)}  \|_{\infty} 
=  L_1^{(n)} \theta_n \rightarrow 0 \text{ as } n\rightarrow \infty. 
 \esp
\end{align*}
By the induction hypothesis for $d-1$, the CLT holds, and the proof of Lemma \ref{l1} is complete.
\end{proof}

\begin{Lem}\label{l2}
Suppose that $ L_1^{(n)} \rightarrow \infty$ as $n\rightarrow \infty$ together with the properties mentioned earlier, namely,  \eqref{e1}, \eqref{e1.1}, \eqref{inf}, and \eqref{strong}.  For $\forall n \in \N, \ \forall j \in \{1, 2, \ldots, L_1^{(n)}\}$,  let us define the random variable \[ Y_j^{(n)}=\sum_{\{ k=(k_1, \ldots, k_d) \in B(L^{(n)}) : k_1=j \} } X^{(n)}_k.
\] 
Assume also that \beq\label{l2.0} \sup_{j \in \{1, 2, \ldots, L_1^{(n)} \} } \left( s_j^{(n)} \right)^2 \rightarrow 0 \text{ as } n\rightarrow \infty, \text{ where }   \left( s_j^{(n)} \right)^2= E \left( Y_j^{(n)} \right)^2.\eeq
Then
\beq\label{l2g} \frac{1}{\sigma_n}\sum_{k \in B(L^{(n)})} X_k^{(n)}=\frac{1}{\sigma_n} \sum_{j=1}^{L_1^{(n)}} Y_j^{(n)}  \Rightarrow
N(0, 1) \text{ as } n\rightarrow\infty. \eeq
\end{Lem}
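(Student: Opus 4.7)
The strategy is to reduce to the one-dimensional base case of Theorem~\ref{r} (Peligrad's theorem, \cite{MP}, Theorem~2.2) applied to the ``row-sum'' array $Y^{(n)}:=(Y_j^{(n)})_{1\le j\le L_1^{(n)}}$, viewed as a $1$-dimensional random field. Since $\sum_{j=1}^{L_1^{(n)}} Y_j^{(n)}=\sum_{k\in B(L^{(n)})} X_k^{(n)}$, the first equality in \eqref{l2g} is immediate, and to obtain the weak convergence it suffices to verify for $Y^{(n)}$ the two mixing assumptions, the positivity of the total variance, and the Lindeberg condition, and then to invoke the $d=1$ case.

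For the mixing assumptions, each $Y_j^{(n)}$ is $\sigma(X_k^{(n)}:k_1=j)$-measurable, so for disjoint $Q,S\subseteq\{1,\dots,L_1^{(n)}\}$ one has $\sigma(Y_j^{(n)}, j\in Q)\subseteq\sigma(X_k^{(n)}:k_1\in Q)$ and similarly for $S$. Taking $u=1$ in the definitions of $\alpha(X^{(n)},m)$ and $\rho'(X^{(n)},1)$ yields the inheritance bounds $\alpha(Y^{(n)},m)\le\alpha(X^{(n)},m)$ and $\rho'(Y^{(n)},1)\le\rho'(X^{(n)},1)\le\rho'(1)<1$, which together with \eqref{e1} supply the mixing requirements for the base case. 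The total variance of $Y^{(n)}$ equals $\sigma_n^2$, which is bounded below by $\inf_n\sigma_n^2>0$ from \eqref{inf}.

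The substantive step is the Lindeberg condition for $Y^{(n)}$. Fixing $\varepsilon>0$, Markov's inequality gives
\[\frac{1}{\sigma_n^2}\sum_{j=1}^{L_1^{(n)}} E(Y_j^{(n)})^2\,I\bigl(|Y_j^{(n)}|>\varepsilon\sigma_n\bigr)\le\frac{1}{\varepsilon^2\sigma_n^4}\sum_{j=1}^{L_1^{(n)}} E(Y_j^{(n)})^4.\]
To bound $E(Y_j^{(n)})^4$, apply the Rosenthal inequality (Theorem~\ref{br2}) with $m=1$, $r=\rho'(1)$, and $S=\{k\in B(L^{(n)}):k_1=j\}$. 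Using $\|X_k^{(n)}\|_\infty\le\theta_n$ to write $E(X_k^{(n)})^4\le\theta_n^2\,E(X_k^{(n)})^2$, and applying Theorem~\ref{br1} to the same slice to obtain $\sum_{k\in S} E(X_k^{(n)})^2\le C_1\,(s_j^{(n)})^2$, one finds
\[E(Y_j^{(n)})^4\le C_2\,(s_j^{(n)})^2\bigl(\theta_n^2+(s_j^{(n)})^2\bigr).\]
Summing over $j$ and invoking Theorem~\ref{br1} once more, this time for the $1$-dimensional field $Y^{(n)}$, to obtain $\sum_j(s_j^{(n)})^2\le C_3\,\sigma_n^2$, yields
\[\frac{1}{\sigma_n^4}\sum_{j=1}^{L_1^{(n)}} E(Y_j^{(n)})^4\le\frac{C_4}{\sigma_n^2}\Bigl(\theta_n^2+\sup_{1\le j\le L_1^{(n)}}(s_j^{(n)})^2\Bigr),\]
which tends to $0$ by \eqref{inf}, \eqref{strong}, and \eqref{l2.0}. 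The $d=1$ case of Theorem~\ref{r} applied to $Y^{(n)}$ then delivers \eqref{l2g}. The main obstacle is precisely this Lindeberg verification, where the assumption $\sup_j(s_j^{(n)})^2\to0$ is used in an essential way together with the uniform bound $\rho'(1)<1$, which enables both the Rosenthal-type fourth-moment estimate and the comparability of $\sum_j(s_j^{(n)})^2$ with $\sigma_n^2$.
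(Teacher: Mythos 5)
Your proposal is correct and follows essentially the same route as the paper: verify that the row-sum array $\left(Y_j^{(n)}\right)$ inherits the mixing conditions, establish the Lyapounov/Lindeberg condition with exponent $4$ by combining the Rosenthal inequality (Theorem~\ref{br2}) with the boundedness $\theta_n\to 0$ for the fourth-moment term and the hypothesis $\sup_j \left(s_j^{(n)}\right)^2\to 0$ for the squared-variance term, and then invoke Peligrad's $d=1$ theorem. The only cosmetic difference is that you bound $E\left(Y_j^{(n)}\right)^4$ slice by slice before summing, whereas the paper applies the Rosenthal bound to the sum over $j$ directly; the estimates are identical.
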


\begin{proof}
We shall first give some notations and basic observations that will be used in both the main argument below for Lemma \ref{l2} and the argument for Lemma \ref{l3} in Section 4. 

For each $n \in \N$ and each $j \in \left\{1, 2, \ldots, L_1^{(n)} \right\}$, define the (``slice") set 
\[ \text{slice}_j^{(n)}:=\left\{k:=(k_1,  \ldots, k_d) \in B(L^{(n)}) : k_1=j \right\}. \]
Then for each such $n$ and $j$, $Y_j^{(n)}=\sum_{k \in \text{slice}_j^{(n)} } X^{(n)}_k$. By Theorem \ref{br1}, for each such $n$ and $j$, the two numbers $\left( s_j^{(n)} \right)^2=E \left( Y_j^{(n)} \right)^2= E \left( \sum_{k \in \text{slice}_j^{(n)} } X^{(n)}_k   \right)^2$ and $\sum_{k \in \text{slice}_j^{(n)} }E \left(  X^{(n)}_k   \right)^2$ either are both $0$ or are both positive and within a constant factor (in $[c^{-1}, c]$, where $c :=(1+\rho'(1) )^d/ (1-\rho'(1))^d $) of each other. Similarly, by \eqref{inf} and Theorem \ref{br1}, for each $n \in \N$, the following three quantities  are positive and are within a constant factor (in the same interval $[c^{-1}, c]$) of each other:
\begin{align*} \label{equalities} \bsp &  \sigma_n^2= E \left(  \sum_{k \in B(L_n)} X_k^{(n)}\right)^2 = E \left(  \sum_{j=1}^{L_1^{(n)}} Y_j^{(n)}\right)^2;\\
& \sum_{j=1}^{L_1^{(n)} } \left( s_j^{(n)}\right)^2= \sum_{j=1}^{L_1^{(n)} }  E \left(  Y_j^{(n)} \right)^2=\sum_{j=1}^{L_1^{(n)} }  E \left(  \sum_{k \in \text{slice}_j^{(n)} } X_k^{(n)} \right)^2; \\
&  \sum_{j=1}^{L_1^{(n)} }   \sum_{k \in \text{slice}_j^{(n)} } E \left(  X_k^{(n)} \right)^2 =\sum_{k \in B(L^{(n)})}E \left(  X_k^{(n)} \right)^2.\\
\esp
\end{align*}
Finally, by \eqref{inf}, $\sigma^2_n \ll\sigma_n^4$ as $ n \rightarrow \infty$. Here and below, the notation ``$\ll$" means $O(\ldots)$.

To prove \eqref{l2g}, the main task will be to show that Lyapounov's condition holds (with exponent $4$), that is, 
\beq\label{lyap}
\lim_{n \rightarrow \infty} \frac{1}{\sigma_n^4} \sum_{j=1}^{L_1^{(n)} }E \left(  Y_j^{(n)} \right)^4=0.
\eeq
For each $n \in \N$, applying \eqref{e1.1} and Theorem \ref{br2} (and using its constant $C$) and then adding up over all  $j \in \left\{1, 2, \ldots, L_1^{(n)} \right\}$, we obtain that
\begin{align}\label{l2.1} \bsp
&  \sum_{j=1}^{L_1^{(n)} } E \left(  Y_j^{(n)} \right)^4 
\leq C \left[   \sum_{j=1}^{L_1^{(n)} } \sum_{k \in \text{slice}_j^{(n)}}  E \left( X_k^{(n)} \right)^4   +   \sum_{j=1}^{L_1^{(n)} }  \left(   \sum_{k \in \text{slice}_j^{(n)} } E \left( X_k^{(n)} \right)^2 \right)^2  \right]
\esp
\end{align}
Using \eqref{strong} and  Theorem \ref{br1},  the first term in the right-hand side of \eqref{l2.1} can be bounded above in the following way:
\begin{align*} \bsp&   \sum_{j=1}^{L_1^{(n)} } \sum_{k \in \text{slice}_j^{(n)}}  E \left( X_k^{(n)} \right)^4  =
 \sum_{j=1}^{L_1^{(n)} } \sum_{k \in \text{slice}_j^{(n)}}  E \left[ \left( X_k^{(n)}  \right)^2 \left( X_k^{(n)}  \right)^2 \right]\\
 & \leq \theta_n^2  \sum_{j=1}^{L_1^{(n)} } \sum_{k \in \text{slice}_j^{(n)}}  E \left( X_k^{(n)} \right)^2 \ll \theta_n^2 \sigma_n^2 \ll \theta_n^2 \sigma_n^4=o(\sigma_n^4)   \text{ as } n \rightarrow \infty.
 \esp.
\end{align*}
By \eqref{l2.0} (and the fact $\sigma_n^2 \ll \sigma_n^4$), the second term in the right-hand side of \eqref{l2.1} can be bounded above in the following way:
\begin{align*}\bsp &
 \sum_{j=1}^{L_1^{(n)} }  \left(   \sum_{k \in \text{slice}_j^{(n)} } E \left( X_k^{(n)} \right)^2 \right)^2  = \sum_{j=1}^{L_1^{(n)} }  \left(   \sum_{k \in \text{slice}_j^{(n)} } E \left( X_k^{(n)} \right)^2 \right)  \left(   \sum_{k \in \text{slice}_j^{(n)} } E \left( X_k^{(n)} \right)^2 \right) \\
&\ll   \left[ \sup_{j \in \{1, 2, \ldots, L_1^{(n)} \} }  \left( s_j^{(n)} \right)^2 \right]  \sum_{j=1}^{L_1^{(n)} }   \sum_{k \in \text{slice}_j^{(n)} } E \left( X_k^{(n)} \right)^2 \\
&\ll \left[  \sup_{j \in \{1, 2, \ldots, L_1^{(n)} \} } \left( s_j^{(n)} \right)^2 \right] \sigma_n^2 =o(\sigma_n^4)  \text{ as } n \rightarrow \infty.
\esp
\end{align*}
Hence, \eqref{lyap} holds, and as a consequence, the Lindeberg condition is satisfied.  Applying Peligrad's CLT for $d=1$ (see \cite{MP}, Theorem 2.2) to the array  $\left( Y_j^{(n)},\ n\in \N, \ j \in \left\{1, 2, \ldots, L_1^{(n)} \right\} \right)$, one has that \eqref{l2g} holds. The proof of Lemma \ref{l2} is complete.
\end{proof}

\section{``General Lemma"}
The following lemma deals with the most general case under the restrictions \eqref{inf} and \eqref{strong}.

\begin{Lem}\label{l3}
Suppose that for each $n \in \N$, $L_n \in \N^d$, $X^{(n)}:=\left( X_k^{(n)}, k \in B(L_n) \right)$  is a (not necessarily strictly stationary) random field such that for each $k\in B(L_n)$, $ X_k^{(n)}$ has mean zero and finite second moment. Suppose that \eqref{e1}, \eqref{e1.1},  \eqref{inf}, and \eqref{strong} are satisfied. Then 
\begin{align*}
 \frac{1}{\sigma_n}\sum_{k \in B(L_n)} X_k^{(n)} \Rightarrow
N(0, 1) \text{ as } n\rightarrow\infty.
\end{align*}
\end{Lem}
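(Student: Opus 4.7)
The plan is to reduce the general case to the two special cases covered by Lemmas \ref{l1} and \ref{l2} via a subsequence argument combined with a relabeling of coordinates.

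Since convergence in distribution is determined by subsequences, it suffices to show that every subsequence of $\N$ admits a further subsequence along which \eqref{goal} holds. Given such a subsequence, pass to a further subsequence so that, for each $u\in\{1,\ldots,d\}$, either $\sup_n L_u^{(n)}<\infty$ or $L_u^{(n)}\to\infty$. All hypotheses and the conclusion of Lemma \ref{l3} are invariant under permutations of the coordinate axes, because the mixing coefficients $\alpha(X^{(n)},\cdot)$ and $\rho'(X^{(n)},\cdot)$, the variance $\sigma_n^2$, and the uniform bound $\theta_n$ depend only on the unordered family of coordinates. We may therefore relabel so that any bounded coordinate is coordinate $1$. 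If $\sup_n L_1^{(n)}<\infty$ along the chosen subsequence, Lemma \ref{l1} directly yields the CLT and we are done.

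Otherwise $L_u^{(n)}\to\infty$ for every $u$, and the plan is to slice along coordinate $1$ and invoke Lemma \ref{l2}. The only hypothesis of Lemma \ref{l2} that remains to be verified is the uniform slice-variance condition $\sup_j(s_j^{(n)})^2\to 0$. This is the main obstacle, because an individual slice is a $(d-1)$-dimensional subarray whose $L^2$-norm is not automatically controlled by $\theta_n\to 0$; it is possible, for instance, that a substantial fraction of $\sigma_n^2$ is concentrated in a single slice.

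To surmount the obstacle, I would replace unit-thickness slices by thick blocks of width $m_n\to\infty$ in coordinate $1$, separated by thin gap-strips of width $q_n\to\infty$ with $q_n=o(m_n)=o(L_1^{(n)})$. Theorem \ref{br0}, applied with $\rho'(1)<1$, bounds the $L^2$-contribution of the gap-strips by a constant times the sum of second moments over those strips, which is asymptotically a vanishing fraction of $\sigma_n^2$ for a suitable choice of $q_n$. The block sums then form a one-dimensional array that inherits strong mixing (with an improved mixing rate due to the gap separation) and the condition $\rho'(\cdot,1)<1$ from $X^{(n)}$. With $m_n$ chosen so that the per-block variances are uniformly $o(\sigma_n^2)$, Peligrad's one-dimensional CLT (in the manner of Lemma \ref{l2}) gives asymptotic normality of the block-sum array and hence of $\sigma_n^{-1}\sum_{k\in B(L_n)} X_k^{(n)}$ along the subsequence. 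The delicate step, which I expect to require the most technical care, is to balance $m_n$ and $q_n$ so that the per-block variances are simultaneously small compared with $\sigma_n^2$ even when the second-moment mass is distributed very unevenly across slices.
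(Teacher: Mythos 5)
Your reduction to subsequences and your handling of the two easy cases (some coordinate bounded, hence Lemma \ref{l1}; all slice variances uniformly $o(1)$, hence Lemma \ref{l2}) match the paper's Steps 1--3. But there is a genuine gap in the hard case, and it is exactly the one you flag yourself: when a positive fraction of $\sigma_n^2$ is concentrated in a single slice $\{k: k_1=j\}$, no choice of block width $m_n$ and gap width $q_n$ can make the per-block variances uniformly $o(\sigma_n^2)$. Indeed, whichever block contains that heavy slice has, by the lower bound in Theorem \ref{br1}, variance at least $C^{-1}\sum_{k}E(X_k^{(n)})^2$ summed over the block, hence at least $C^{-2}(s_j^{(n)})^2$, which stays bounded away from $0$ relative to $\sigma_n^2$. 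So the ``delicate balancing'' you defer to the end is not a technicality to be tuned; it is impossible as stated, and the blocking-into-thick-blocks scheme cannot by itself deliver the Lindeberg/Lyapounov condition needed for the one-dimensional CLT applied to the block sums.

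The paper's resolution of this case (Steps 4--12) requires several ideas absent from your proposal. After ordering the slice variances decreasingly and extracting a diagonal subsequence along which $(s_{p(n,l)}^{(n)})^2\to\lambda_l$, it splits the first-coordinate index set into: the $q$ heaviest slices $\Gamma_1^{(q)}$ (a set of \emph{bounded} cardinality, so that the induction hypothesis via Lemma \ref{l1} gives a CLT for their sum $U_1^{(q)}$); a buffer $\Gamma_2^{(q)}$ and a proximity set $\Gamma_3^{(q)}$ whose contributions are shown to be negligible in $L^2$; and the remainder $\Gamma_4^{(q)}$, whose slice variances are uniformly small so that Lemma \ref{l2} applies to $U_4^{(q)}$. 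The two non-negligible pieces $U_1^{(q)}$ and $U_4^{(q)}$ are then combined by an $\alpha$-mixing independence-approximation (Bradley, Theorem 25.56) across an alternating block structure separated by gaps of width $m_q$ with $\alpha(m_q)<q^{-2}$, yielding a limit $N(0,\eta_1^2+\eta_4^2)$; finally a uniform fourth-moment bound (Theorem \ref{br2}) gives uniform integrability of the squared sums and hence $\sigma_{n_q}^2\to\eta_1^2+\eta_4^2$, which identifies the normalization. None of these steps --- isolating finitely many heavy slices and handling them by the $(d-1)$-dimensional induction, the mixing-based gluing of the heavy and light parts, and the convergence-of-variance argument --- is present in your outline, so the proof is incomplete precisely where the real difficulty lies.
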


\bpf
It suffices to show that for an arbitrary fixed infinite set $S \subseteq \N$, there exists an infinite set $T \subseteq S$ such that 
\begin{align}\label{l3g}
 \frac{1}{\sigma_n}\sum_{k \in B(L_n)} X_k^{(n)} \Rightarrow
N(0, 1) \text{ as } n\rightarrow\infty, \ n \in T.
\end{align}
\noindent Again we write $L_n$ as $L^{(n)}:=\left(L_1^{(n)}, L_2^{(n)}, \ldots, L_d^{(n)} \right)$. We freely use the notations $Y_j^{(n)}, $$\left( s_j^{(n)} \right)^2$ and $\text{slice}_j^{(n)}$ from Lemma \ref{l2} and its proof. The observations in the first part of the proof of Lemma \ref{l2} (that is, prior to the paragraph containing equation \eqref{lyap}) hold in our context here, and will be used freely. (Of course the convergence to $0$ in \eqref{l2.0} is not assumed, and  may not hold, in our context here.) Applying those observations, without loss of generality (that is, without sacrificing \eqref{inf} or \eqref{strong}) we now normalize so that
 \beq\label{normalize} \forall n \geq 1, \ \sum_{j=1}^{L_1^{(n)} }  \left( s_j^{(n)} \right)^2=1.\eeq
The proof of  \eqref{l3g} (including the choice of an appropriate infinite set $T \subseteq S$) will be divided into twelve ``steps".\\

{\bf{Step 1: }}  Consider first the case where  $\sup_{n \in S} L_1^{(n)}< \infty$. By Lemma \ref{l1}, the asymptotic normality in \eqref{l3g} holds with $T:=S$, and for this case we are done.\\

{\bf{Step 2: }} Now henceforth suppose that $\sup_{n \in S} L_1^{(n)}= \infty$. 

Let us choose an infinite set $S_0 \subseteq S$ be such that $L_1^{(n)}\rightarrow\infty
\text{ as } n\rightarrow\infty, \ n \in S_0$. For each $n \geq 1$,  let $p(n, j)$, $j \in \{1, 2, \ldots, L_1^{(n)} \} $ be a  permutation of the set $ \{1, 2, \ldots, L_1^{(n)} \} $ such that 
\beq\label{perm}
\left( s_{p(n, 1)}^{(n)} \right)^2 \geq \left( s_{p(n, 2)}^{(n)} \right)^2 \geq \ldots \geq \left( s_{p(n, L_1^{(n)} )}^{(n)} \right)^2.
\eeq
By \eqref{normalize}, we obtain that 
\beq\label{norm}
\sum_{j=1}^{L_1^{(n)} }  \left( s_{p(n, j)}^{(n)} \right)^2=1.
\eeq
As a consequence, by \eqref{perm} and \eqref{norm}, 
\beq\label{l3.1}
\forall n \geq 1,\  \forall j \in \{1, 2, \ldots, L_1^{(n)} \}, \  \left( s_{p(n, j)}^{(n)} \right)^2 \leq \frac{1}{j}.
\eeq
Of course since $L_1^{(n)} \rightarrow \infty$ as $n \rightarrow \infty, \ n \in S_0$, one has that for each $l\geq1$, the index $p(n, l)$ and the number $ \left( s_{p(n, l)}^{(n)} \right)^2$ are defined for all sufficiently large $n \in S_0$. That will be used repeatedly in what follows.

Let us now define the following infinite sets: 
\[ S_1 \subseteq S_0 \text{ such that } \lambda_1=\lim_{ n \rightarrow \infty, \ n \in S_1} \left( s_{p(n, 1)}^{(n)} \right)^2 \text{ exists}; \]

\[ S_2 \subseteq S_1 \text{ such that } \lambda_2=\lim_{ n \rightarrow \infty, \ n \in S_2} \left( s_{p(n, 2)}^{(n)} \right)^2 \text{ exists}; \]

\[ S_3 \subseteq S_2 \text{ such that } \lambda_3=\lim_{ n \rightarrow \infty, \ n \in S_3} \left( s_{p(n, 3)}^{(n)} \right)^2 \text{ exists}; \]
and so on. By the Cantor diagonalization method,  we obtain an infinite set $S_{00}:=\{ \widetilde{n}_1<\widetilde{n}_2<\widetilde{n}_3< \ldots \} $ such that $\widetilde{n}_l \in S_l$ and $S_l \supseteq \{\widetilde{n}_l, \widetilde{n}_{l+1}, \widetilde{n}_{l+2}, \ldots \}$. 
For the resulting infinite set $S_{00}$, one has that  
$S_{00} \subseteq S_0 \subseteq S$, and by \eqref{perm} one also has that 
\beq\label{l3.2}
\forall l\geq 1,\  \lim_{ n \rightarrow \infty, \ n \in S_{00}} \left( s_{p(n, l)}^{(n)} \right)^2 =\lambda_l; \text{ with  } \lambda_1 \geq \lambda_2 \geq \lambda_3 \ldots.
\eeq
In addition,  $\forall m\geq 1$, one has by \eqref{norm} that  
$ \sum_{j=1}^{m }  \left( s_{p(n, j)}^{(n)} \right)^2\leq 1 \text{ for all } n \in S_{00}$ sufficiently large such that $L_1^{(n)} \geq m$; and hence for every $m \geq 1$, 
$ \sum_{j=1}^m \lambda_j \leq 1 $ by \eqref{l3.2}. Hence
\beq\label{lambda}
\lambda:=\sum_{j=1}^{\infty} \lambda_j \leq 1.\\
\eeq

{\bf{Step 3: }} Consider first the case where  $\lambda=0$. Then $\lambda_j=0$ for all $ j \geq1$.  By \eqref{l3.1}, \eqref{l3.2}, and a simple argument,   $\sup_{j \in \{1, 2, \ldots, L_1^{(n)} \} } \left( s_{p(n, j)}^{(n)} \right)^2 \rightarrow 0 \text{ as } n\rightarrow \infty, \ n \in S_{00}.$ By Lemma \ref{l2}, 
\beq\label{l3gs}\frac{1}{\sigma_n} \sum_{j=1}^{L_1^{(n)}} Y_j^{(n)}=\frac{1}{\sigma_n}\sum_{k \in B(L^{(n)})} X_k^{(n)} \Rightarrow
N(0, 1) \text{ as } n\rightarrow\infty, \ n \in S_{00}. \eeq
Thus \eqref{l3g} holds with $T:=S_{00}$, and for this case we are done.\\

{\bf{Step 4: }} Now henceforth suppose that 
$ \lambda >0$. (Then by \eqref{l3.2}  and \eqref{lambda},  $\lambda_1>0$.) Our task now is to show that  \eqref{l3g} holds for some infinite set $T \subseteq S_{00}$. 

Recall again that $L_1^{(n)}  \rightarrow\infty \text{ as }n\rightarrow\infty, \ n \in S_{00}$. For each $q \geq 1$ and  each $ n \in S_{00}$ such that $L_1^{(n)} >q$, define the set
\begin{align*}
\overline{\Gamma}_1^{(q, n)}=\{ p(n, 1), p(n, 2), \ldots, p(n, q)\} 
\end{align*}
and the random variable
\begin{align*}
W^{(q, n)}:= \sum_{j \in \overline{\Gamma}_1^{(q, n)} } Y_j^{(n)}.
\end{align*}

Recall that (here in Step 4 and henceforth) $\lambda_1>0$. By \eqref{l3.2}, $E \left( Y_{p(n, 1)}^{(n)} \right)^2=\left( s_{p(n, 1)}^{(n)} \right)^2>\lambda_1/2$ for all $n \in S_{00}$ sufficiently large.

For each positive integer $q$, the following observations hold: Trivially,  we have that $\sum_{j \in \overline{\Gamma}_1^{(q, n)}} E \left( Y_j^{(n)} \right)^2  \geq E \left( Y_{p(n, 1)}^{(n)} \right)^2 \geq \lambda_1/2 $ for all $n \in S_{00}$ sufficiently large. Hence, by Theorem \ref{br1}, there exists a positive number $c_0$ (not even depending on $q$) such that $E \left( W^{(q, n)} \right)^2\geq c_0$ for all $n \in S_{00}$ sufficiently large.
That is the analog of \eqref{inf} for   sufficiently large $n \in S_{00}$ when the indices $k:=(k_1, \ldots, k_d) \in B(L^{(n)})$ are restricted to the ones such that  $k_1 \in \overline{\Gamma}_1^{(q, n)}$. Hence, one can apply Lemma \ref{l1}, and one obtains that 
\begin{align*}
\frac{W^{(q, n)}} { \|W^{(q, n)} \|_2} \Rightarrow
N(0, 1) \text{ as } n\rightarrow\infty, \ n \in S_{00}. 
\end{align*}
The convergence above was shown for arbitrary $q\geq 1$. By a well known theorem for continuous limiting distributions, one now has that
\begin{align*}
\forall q \geq 1,\  \sup_{x\in \R} \left|  F_{ 
W^{(q, n)} / \|W^{(q, n)} \|_2}(x
) -\Phi(x)  \right| \rightarrow
0 \text{ as } n\rightarrow\infty, \ n \in S_{00}. 
\end{align*}
Here $\Phi(x)$ represents the distribution function of a $N(0, 1)$ random variable and $F_V$
 is the  distribution function of a given random variable $V$.\\
 
{\bf{Step 5: }}  For each $q \geq 1$, let $m_q \in \N$ be such that
 \beq\label{c1}\alpha(m_q)<\frac{1}{q^2}.\eeq 
 Let $n_1<n_2< \ldots \in S_{00}$ be such that  for all $q \geq 1$, the following hold:
 \beq\label{c2}
 L_1^{(n_q)}>q^2m_q;
 \eeq
 
  \beq\label{c3}
\|W^{(q, n_q)} \|_2>0 \text{ and }  \sup_{x\in \R} \left|  F_{ 
W^{(q, n_q)} / \|W^{(q, n_q)} \|_2}(x
) -\Phi(x)  \right| \leq \frac{1}{q}, \text{ and}
 \eeq
 
 \beq\label{c4}
 \left|   \sum_{j=q+1}^{q^2m_q}   \left(  s_{p(n_q, j)}^{(n_q)} \right)^2   -    \sum_{j=q+1}^{q^2m_q} \lambda_j  \right| \leq \frac{1}{q}.
  \eeq
(To justify \eqref{c4}, see \eqref{l3.2}.)\\

\noindent For each $q \geq 1$, define the following four index sets:
 
 \begin{equation}\label{gamma}
\begin{cases}
\Gamma_1^{(q)}=\{ p(n_q, 1), p(n_q, 2), \ldots, p(n_q, q)\}, \\ 

\Gamma_2^{(q)}=\{ p(n_q, q+1), p(n_q, q+2), \ldots, p(n_q, q^2m_q)\}, \\ 

\Gamma_3^{(q)}=\{  j \in \{1, \ldots, L_1^{(n_q)} \} -\{ \Gamma_1^{(q)} \cup \Gamma_2^{(q)}  \} | \  \exists i \in \Gamma_1^{(q)} \text{ such that } \   |i-j|\leq m_q \},\\ 

\Gamma_4^{(q)}=\{  j \in \{1, \ldots, L_1^{(n_q)} \} -\{ \Gamma_1^{(q)} \cup \Gamma_2^{(q)}  \} | \ \forall i \in \Gamma_1^{(q)} ,  \ |i-j|> m_q \}.  
\end{cases}
\end{equation}
For each $q \geq 1$, those four sets in \eqref{gamma} form a partition of the set $ \left \{1, 2, \ldots, L_1^{(n_q)} \right\}$ (see \eqref{c2}). For a given $q \geq 1$, one of the latter two sets $\Gamma_3^{(q)}$, 
$\Gamma_4^{(q)}$ could perhaps be empty. Note that for each  $q \geq 1$, the set $\Gamma_1^{(q)}$ here is the set $\overline{\Gamma}_1^{(q, n_q)}$ in the notations in Step 4.

 For each $q \geq 1$ and each $i \in \{1, 2, 3, 4\}$, define the random variable 
\begin{equation}\label{l3.4}
U_i^{(q)}=\sum_{j \in \Gamma_i^{(q)}} Y_j^{(n_q)}.
\end{equation}
Note that  for each $q \geq 1$, $U_1^{(q)}=W^{(q, n_q)}$ by \eqref{l3.4} (see Step 4), and also
\begin{equation}\label{l3.5}
\sum_{i=1}^4 U_i^{(q)}=\sum_{j=1}^{L_1^{(n_q)}} Y_j^{(n_q)}=\sum_{k \in B \left( L^{(n_q)} \right)} X_k^{(n_q)} .
\end{equation}

{\bf{Step 6: }}  Notice that due to \eqref{e1.1}, Theorem \ref{br1}, 
followed by \eqref{normalize}, we obtain that  for each $q \geq 1$,
\begin{align*}\label{l3.6}  \bsp &0\leq \left( \frac{1-\rho'(1)}{1+\rho'(1)}  \right) \sum_{j \in \Gamma_1^{(q)} }  E \left( Y_j^{(n_q)} \right)^2 \leq E \left( U_1^{(q)} \right)^2 \leq   \left( \frac{1+\rho'(1)}{1-\rho'(1)}  \right) \sum_{j \in \Gamma_1^{(q)} }  E \left( Y_j^{(n_q)} \right)^2\\
& \hspace{7.5cm} \leq  \left( \frac{1+\rho'(1)}{1-\rho'(1)}  \right)  < \infty.
\esp
\end{align*}
Similarly,  for each $q \geq 1$,
\begin{align*} \bsp 
&0 \leq E \left( U_4^{(q)} \right)^2 \leq   \left( \frac{1+\rho'(1)}{1-\rho'(1)}  \right) < \infty.
\esp
\end{align*}
Hence, there exists an infinite set $T  \subseteq \N$ such that 
\[
\eta_1^2:=\lim_{q \rightarrow \infty, \ q \in T} E \left( U_1^{(q)} \right)^2 \text{ exists (in } \R) \text{, and}
\]
\[
\eta_4^2:=\lim_{q \rightarrow \infty, \ q \in T} E \left( U_4^{(q)} \right)^2 \text{ exists (in } \R).
\]

Our goal now is to prove that for the infinite set $T$ just specified here, 
\[ \sigma_{n_q}^{-1} \sum_{k \in B\left( L^{(n_q)} \right) } X_k^{(n_q)} \Rightarrow N(0, 1) \text{ as } q \rightarrow \infty, \ q \in T.\]
That will accomplish \eqref{l3g} (and therefore complete the proof of Lemma \ref{l3}) with the set $T$ in \eqref{l3g} replaced here by the set $\{n_q:  q \in T\}$, which is an infinite subset of $S_{00}$ and hence of $S$.

In what follows, the ``$N(0, 0)$ distribution" will of course mean the degenerate ``point mass at 0". It will be tacitly kept in mind and used freely that if a sequence of random variables converges to 0 in the  2-norm, then it converges to 0 in probability and hence converges to $N(0, 0)$ in distribution.

{\bf{Step 7: }} ``The asymptotic normality of $U_1^{(q)}$". By \eqref{c3}, we obtain that
\[ \sup_{x\in \R} \left| F_{ 
W^{(q, n_q)} / \|W^{(q, n_q)} \|_2}(x
)-\Phi(x)  \right|  \rightarrow 0 \text{ as } q \rightarrow \infty, \text{ hence} \]
\[  \frac{ U_1^{(q)} } {  \left \|  U_1^{(q)} \right\|_2 } \Rightarrow N(0, 1)  \text{ as } q \rightarrow \infty, \ q \in T. \]
So, we obtain the asymptotic normality of the random variable $U_1^{(q)}$, namely
\beq\label{au1}
 U_1^{(q)} \Rightarrow N(0, \eta_1^2)  \text{ as } q \rightarrow \infty, \ q \in T.\\
\eeq

{\bf{Step 8: }} ``The asymptotic normality of $U_4^{(q)}$".
Recall from \eqref{c2} that $L_1^{(n_q)} \rightarrow \infty$ as $ q \rightarrow \infty$.
In addition, by \eqref{l3.1} and the definition of $\Gamma_4^{(q)}$ in \eqref{gamma}, 
\[ \sup_{j \in \Gamma_4^{(q)} } E \left( Y_j^{(n_q)} \right)^2  \leq \frac{1}{q^2 m_q+1} \rightarrow 0\text{ as } q \rightarrow \infty, \ q \in T.       \]
Trivially if $\eta_4^2=0$, or if instead $\eta_4^2>0$ then by Lemma \ref{l2} (with the indices $k:=(k_1, \ldots, k_d) \in B\left( L^{(n_q)} \right)$ restricted to the ones such that $k_1 \in \Gamma_4^{(q)}$), one has that 
\beq\label{au4}
 U_4^{(q)} \Rightarrow N(0, \eta_4^2)  \text{ as } q \rightarrow \infty, \ q \in T.
\eeq

{\bf{Step 9: }}  ``Negligibility of $U_2^{(q)}$".
By \eqref{lambda}, 
\[
\sum_{j=q+1}^{\infty} \lambda_j  \rightarrow 0 \text{ as } q \rightarrow \infty, \ q \in T. 
\]
Therefore, 
\[
\sum_{j=q+1}^{q^2m_q} \lambda_j  \rightarrow 0 \text{ as } q \rightarrow \infty, \ q \in T, 
\]
which gives us by \eqref{c4} that
\[
\sum_{j=q+1}^{q^2m_q}E \left( Y_{p(n_q, j)}^{(n_q)} \right)^2  \rightarrow 0 \text{ as } q \rightarrow \infty, \ q \in T. 
\]
As a consequence, referring to \eqref{gamma} and \eqref{l3.4} and bounding above the second moment of  the random variable $U_2^{(q)}$ using  Theorem \ref{br1} , we obtain that
\[
E \left(   \sum_{j \in \Gamma_2^{(q)} } Y_j^{(n_q)}  \right)^2   \rightarrow 0\text{ as } q \rightarrow \infty, \ q \in T, 
\]
hence
\beq\label{negu2}
 U_2^{(q)}    \rightarrow 0 \text{ in probability  as } q \rightarrow \infty, \ q \in T.
\eeq

{\bf{Step 10: }} ``Negligibility of $U_3^{(q)}$". By \eqref{gamma}, for each $q \geq 1$, $\text{card }\Gamma_1^{(q)}=q$ and hence by a simple argument, $\text{card }\Gamma_3^{(q)} \leq 2q \cdot m_q$.
Using the definition of $U_3^{(q)}  $ given in  \eqref{l3.4}, by Theorem  \ref{br1}  and equations \eqref{l3.1},  \eqref{c2}, and \eqref{gamma} (and using an obvious constant $C$),
\begin{align*} \bsp
&  E \left(  U_3^{(q)}   \right)^2 = E \left(  \sum_{j \in \Gamma_3^{(q)} } Y_j^{(n_q)}    \right)^2 \leq \left( \frac{1+\rho'(1)}{1-\rho'(1)}  \right)^d \sum_{j \in \Gamma_3^{(q)} } \left( s_j^{(n_q)}\right)^2\\
& \leq C \cdot \sum_{j \in \Gamma_3^{(q)} } \frac{1}{q^2m_q} \leq \frac{C \cdot 2q \cdot m_q}{q^2m_q}   \rightarrow 0 \text{ as } q \rightarrow \infty, \ q \in T. \\
\esp
\end{align*}
Therefore,
\beq\label{negu3}
 U_3^{(q)}    \rightarrow 0 \text{ in probability  as } q \rightarrow \infty, \ q \in T.
\eeq

{\bf{Step 11: }} ``A Special Blocking Argument". 
We  now return to the index sets $\Gamma_1^{(q)}$ and $\Gamma_4^{(q)}$ and the random variables $U_1^{(q)}$ and  $U_4^{(q)}$, from 
\eqref{gamma}, \eqref{l3.4}, and Steps 6, 7, and 8. We will set up (possibly ``porous") ``blocks" that alternate between indices in $\Gamma_1^{(q)}$ and $\Gamma_4^{(q)}$. We carry out this process for the case where, for a given $q\geq1$, the minimum and maximum elements of $\Gamma_1^{(q)}\cup \Gamma_4^{(q)}$ both belong to $\Gamma_4^{(q)}$. Then we will indicate the trivial changes needed for the other cases.

Suppose $q \geq 1$. Suppose that $\min \left( \Gamma_1^{(q)}\cup \Gamma_4^{(q)} \right)$ and  $\max \left( \Gamma_1^{(q)}\cup \Gamma_4^{(q)} \right)$ each belong to $\Gamma_4^{(q)}$. Recall from \eqref{gamma} that $\text{card } \Gamma_1^{(q)}=q$. For some positive integer $h(q)$ such that $h(q) \leq q$, there exists an ``alternating sequence" of nonempty, finite, (pairwise) disjoint subsets of $\Z$, namely
 $ \beta_1^{(q)}$, $\gamma_1^{(q)}$, 
$ \beta_2^{(q)}$, $ \gamma_2^{(q)}$, $\dots,  \beta_{h(q)}^{(q)}$,  $\gamma_{h(q)}^{(q)}$, and, $\beta_{h(q)+1}^{(q)}$ with the following properties:
\[  \Gamma_1^{(q)}=\bigcup_{i=1}^{h(q)} \gamma_i^{(q)};\]
\[  \Gamma_4^{(q)}=\bigcup_{i=1}^{h(q)+1} \beta_i^{(q)};\]
\[ \forall i \in \{1, 2, \ldots, h(q)\}, \ m_q+\max \beta_i^{(q)} \leq \min \gamma_i^{(q)};\]
\[ \forall i \in \{1, 2, \ldots, h(q)\}, \ m_q+\max \gamma_i^{(q)} \leq \min \beta_{i+1}^{(q)}.\]
(The last two properties come from the definition of $ \Gamma_4^{(q)}$ in \eqref{gamma}.) Next, define the following random variables:

 \begin{equation}\label{v's}
\forall i \in \{1, 2, \ldots, h(q)+1\}, \ V_i^{(q)}:=\sum_{j \in \beta_i^{(q)} }Y_j^{(n_q)} \text{ and} 
%
%
%
\end{equation}

 \begin{equation}\label{z's}
\forall i \in \{1, 2, \ldots, h(q)\}, \ Z_i^{(q)}:=\sum_{j \in \gamma_i^{(q)} }Y_j^{(n_q)}. 
%
%
%
\end{equation}
Then by \eqref{l3.4}, 
we have the following identities:
\beq\label{sumz}
U_1^{(q)}=\sum_{i=1}^{h(q)} Z_i^{(q)};
\eeq
\beq\label{sumv}
U_4^{(q)}=\sum_{i=1}^{h(q)+1} V_i^{(q)}.
\eeq

%

\noindent For a given $q \geq 1$, those notations were defined in the case where  $\min \left( \Gamma_1^{(q)}\cup \Gamma_4^{(q)} \right)$ and  $\max \left( \Gamma_1^{(q)}\cup \Gamma_4^{(q)} \right)$ both belong to $\Gamma_4^{(q)}$. In the other cases, the notations are the same, but with one or both of the following trivial changes: (i) If  $\min \left( \Gamma_1^{(q)}\cup \Gamma_4^{(q)} \right)$  belongs to $\Gamma_1^{(q)}$, then the set $\beta_1^{(q)}$ is empty and the random variable $V_1^{(q)}$ is identically 0. (ii) If $\max \left( \Gamma_1^{(q)}\cup \Gamma_4^{(q)} \right)$  belongs to $\Gamma_1^{(q)}$, then the set $\beta_{h(q)+1}^{(q)}$ is empty and the random variable $V_{h(q)+1}^{(q)}$ is identically 0.

The rest of the argument here in Step 11 will be carried out in the case where for each $q\geq 1$,  $\min \left( \Gamma_1^{(q)}\cup \Gamma_4^{(q)} \right)$ and  $\max \left( \Gamma_1^{(q)}\cup \Gamma_4^{(q)} \right)$ both belong to $\Gamma_4^{(q)}$. The changes needed in the argument to accommodate all other cases are trivial and need not be spelled out here.


For each $q \geq 1$, construct independent copies of the random variables defined in \eqref{v's} and \eqref{z's}, denoted $ \widetilde{V}_1^{(q)}$, $ \widetilde{Z}_1^{(q)}$, 
$ \widetilde{V}_2^{(q)}$, $ \widetilde{Z}_2^{(q)}$, $\dots,  \widetilde{V}_{h(q)}^{(q)}$,  $\widetilde{Z}_{h(q)}^{(q)}$, and $\widetilde{V}_{h(q)+1}^{(q)}$.  By \eqref{sumz} and Step 7, we obtain that  
\begin{align*} \bsp
\sum_{i=1}^{h(q)}Z_i^{(q)} \Rightarrow N(0, \eta_1^2)  \text{ as } q \rightarrow \infty, \ q \in T.
\esp
\end{align*}
By \eqref{c1}, the following holds:
\[\sum ^{h(q)-1}_{k=1} \alpha \left (\sigma
 \left (Z^{(q)}_i, 1\leq i \leq k  \right ), \sigma \left (Z^{(q)}_{k+1} \right )
\right ) \leq \sum ^{h(q)-1}_{k=1} \alpha ( 2m_q  ) \leq \frac{q}{q^2} \rightarrow 0 \text{ as } q \rightarrow\infty, \ q\in T.\]
Hence, by \cite{Bradley3} (Theorem 25.56),
\beq\label{l3.12}
\sum_{i=1}^{h(q)} \widetilde{Z}_i^{(q)}  \Rightarrow N(0, \eta_1^2) \text{ as } q \rightarrow \infty, \ q \in T.
\eeq
Similarly,  we obtain that 
\[\sum ^{h(q)}_{k=1} \alpha \left (\sigma
 \left (V^{(q)}_i, 1\leq i \leq k  \right ), \sigma \left (V^{(q)}_{k+1} \right )
\right ) \leq \sum ^{h(q)-1}_{k=1} \alpha ( 2m_q  ) \leq \frac{q}{q^2} \rightarrow 0 \text{ as } q \rightarrow\infty, \ q\in T,\]
and hence, 
\beq\label{l3.13}
\sum_{i=1}^{h(q)+1} \widetilde{V}_i^{(q)}  \Rightarrow N(0, \eta_4^2) \text{ as } q \rightarrow \infty, \ q \in T.
\eeq
By equations \eqref{l3.12}, \eqref{l3.13}, and independence of the random variables $ \widetilde{V}_{i}^{(q)}$,  $\widetilde{Z}_{j}^{(q)}$, with $i \in \{1, 2, \ldots, h(q)+1\} $ and $j \in \{1, 2, \ldots, h(q)\} $, we obtain that
\beq\label{l3.14}
\sum_{i=1}^{h(q)} \widetilde{Z}_i^{(q)} +  \sum_{i=1}^{h(q)+1} \widetilde{V}_i^{(q)}  \Rightarrow N(0, \eta_1^2+\eta_4^2) \text{ as } q \rightarrow \infty, \ q \in T.
\eeq
Next, for the entire ``alternating sequence" $V_1^{(q)}, Z_1^{(q)}, V_2^{(q)}, Z_2^{(q)}, \ldots, V_{h(q)+1}^{(q)}$, we note from \eqref{c1} that 
\[ 2q\cdot  \alpha ( m_q  ) \leq \frac{2q}{q^2} \rightarrow 0 \text{ as } q\rightarrow\infty, \ q\in T,\]
and applying again \cite{Bradley3} (Theorem 25.56)  and \eqref{l3.14}, we obtain the analog of \eqref{l3.14} with $\widetilde{Z}_i^{(q)}$ and $\widetilde{V}_i^{(q)}$ replaced by $Z_i^{(q)}$ and $ V_i^{(q)}$, that is, 
\beq\label{l3.15}
U_1^{(q)} + U_4^{(q)} \Rightarrow N(0, \eta_1^2+\eta_4^2)) \text{ as } q \rightarrow \infty, \ q \in T.
\eeq
Applying Slutski's theorem, by \eqref{negu2}, \eqref{negu3}, and \eqref{l3.15}, we obtain that
\begin{align}\label{last} \bsp &  \sum_{k \in B(L^{(n_q)} ) }X_k^{(n_q)} = \sum_{ k \in \text{slice}_j^{(n_q)}} Y_j^{(n_q)}= \sum_{i=1}^4 U_i^{(q)}   \Rightarrow N(0, \eta_1^2+\eta_4^2)) \text{ as } q \rightarrow \infty, \ q \in T.
\esp
\end{align}

{\bf{Step 12: }} "Convergence of Variance".
Refer to \eqref{inf}, the last paragraph of Step 6 and the last line of Step 11. To complete the proof of Lemma \ref{l3}, we now only need to show that  
\beq\label{l3.16}
\sigma^2_{n_q} \rightarrow \eta_1^2+\eta_4^2\text{ as } q \rightarrow \infty, \ q \in T.
\eeq
To accomplish that, it will (by a well know theorem) suffice to show that there is an upper bound on the fourth moments of the random variables 
$ \sum_{i=1}^4 U_i^{(q)} $, $q \in T$.

Referring to the first equality in \eqref{last}, one of course has by \eqref{normalize}, \eqref{e1.1}, and Theorem \ref{br1} that the set of numbers $\sigma_{n_q}^2, \ q \in T$ is bounded.

Since $\rho'(1)<1$, by Theorem \ref{br2}, we obtain 
(for the constant $C$ in Theorem \ref{br2}) that
\begin{align}\label{l3.17} \bsp
& E \left(  \sum_{i=1}^{4 }  U_i^{(q)} \right)^4 =E \left( \sum_{k \in B(L^{(n_q)})}  X_k^{(n)} \right)^4 \\
&\leq C \left[   \sum_{k \in B(L^{(n_q)})}   E \left( X_k^{(n)} \right)^4   +   \left( \sum_{k \in B(L^{(n_q)})}  E \left( X_k^{(n_q)} \right)^2 \right)^2  \right].
\esp
\end{align}
Using \eqref{strong} and  Theorem \ref{br1},  the first term in the right-hand side of \eqref{l3.17} can be bounded above in the following way:
\begin{align*} \bsp&   \sum_{k \in B(L^{(n_q)}) }  E \left( X_k^{(n_q)} \right)^4  =
 \sum_{k \in B(L^{(n_q)})   }  E \left[ \left( X_k^{(n_q)}  \right)^2 \left( X_k^{(n_q)}  \right)^2 \right]\\
 & \leq \theta_{n_q}^2 \sum_{k \in B(L^{(n_q)})  }  E \left( X_k^{(n_q)} \right)^2 \ll \theta_{n_q}^2 \cdot  \sigma_{n_q}^2 \rightarrow 0 \text{ as } q \rightarrow \infty, \ q \in T.
 \esp.
\end{align*}

The second term in the right-hand side of \eqref{l3.17} can be bounded above as follows: As $  q \rightarrow \infty, \ q \in T$, by Theorem \ref{br1} again,
\begin{align*}\bsp &
 \left(   \sum_{k \in B(L^{(n_q)})} E \left( X_k^{(n_q)} \right)^2 \right)^2  =   \left(   \sum_{k \in B(L^{(n_q)}) } E \left( X_k^{(n_q)} \right)^2 \right)  \left(   \sum_{k \in B(L^{(n_q)}) } E \left( X_k^{(n_q)} \right)^2 \right) \\
& \hspace{4cm} \ll \left(  \sigma_{n_q}^2 \right)^2\ll1.
\esp
\end{align*}
Hence, $\sup_{q \in T} E \left(  \sum_{i=1}^{4 }  U_i^{(q)} \right)^4 <\infty$. That completes the proof of Lemma \ref{l3}.

\epf

\section{Lindeberg Condition and Truncation}

Recall the Lindeberg condition in \eqref{e2}.  Without loss of generality, we can assume $\sigma^2_n=1$ for each $n \in \N$.  Then  by a simple argument, 
\beq\label{LT1}
\exists \epsilon_1 \geq \epsilon_2 \geq \ldots \downarrow 0 \text{ such that }  \lim_{n\rightarrow\infty} \sum_{k\in B(L_n)} E \left( X_{k}^{(n)} \right)^2I \left( \left | X_k^{(n)}\right|>\epsilon_n\right)=0.
\eeq
We truncate now at the level $\epsilon_n$. Define the following random variables:  for every $  n \in \N$ and every $k \in B(L_n)$,
\beq\label{LT2}  X^{'(n)}_k
:=X_k^{(n)}I(|X_k^{(n)}|\leq \epsilon_n)-EX_k^{(n)}I(|X_k^{(n)}|\leq \epsilon_n) \text{ and} \eeq 

\beq\label{LT3}  X^{''(n)}_k
:=X_k^{(n)}I(|X_k^{(n)}|> \epsilon_n)-EX_k^{(n)}I(|X_k^{(n)}|> \epsilon_n).\eeq 
Obviously (since $EX_k^{(n)}=0$ for each $n$ and $k$),

\beq\label{LT4}  \sum_{k \in B(L_n)} X^{(n)}_k= \sum_{k \in B(L_n)} X^{'(n)}_k+ \sum_{k \in B(L_n)} X^{''(n)}_k.
 \eeq
 Since $\rho'(1)<1$, we can apply again Theorem \ref{br1} and by \eqref{LT1}, we obtain that
 
 \begin{align*} \bsp
& 0\leq E \left(  \sum_{k \in B(L_n)} X^{''(n)}_k   \right)^2  \leq \left( \frac{1+\rho'(1)}{1-\rho'(1)}  \right)^d \sum_{k \in B(L_n)}E \left( X^{''(n)}_k \right)^2\\
& \leq C \sum_{k \in B(L_n)} E \left(X_k^{(n)} \right)^2I(|X_k^{(n)}|> \epsilon_n)   \rightarrow 0 \text{ as }n \rightarrow \infty. \\
\esp
\end{align*}
 
 Therefore, \begin{equation*}\label{LT5}
\sum_{k \in B(L_n)} X^{''(n)}_k   \rightarrow 0 \text{ in probability as }n \rightarrow \infty. \
  \end{equation*}
 As a consequence, by Slutski's theorem, to prove that 
 \beq\label{LTG}
\sum_{k \in B(L_n)} X_k^{(n)} \Rightarrow
N(0, 1) \text{ as } n\rightarrow\infty,
\eeq
we only have left to show that
 
 \begin{equation}\label{LT6}
\sum_{k \in B(L_n)} X^{'(n)}_k   \Rightarrow N(0, 1) \text{ as } n \rightarrow \infty. \
  \end{equation}
 
 Note that $\| X^{'(n)}_k   \|_{\infty} \leq 2\epsilon_n$ for every $n \in \N$ and every $k \in B(L_n)$.  Since $\epsilon_n \rightarrow 0$ as $n \rightarrow  \infty$ by \eqref{LT1}, we have that 
 \[ \sup_{k \in B(L_n)}\| X^{'(n)}_k   \|_{\infty}  \rightarrow 0 \text{ as } n \rightarrow  \infty. \]
Hence by Lemma \ref{l3}, \eqref{LT6} holds, and hence also \eqref{LTG}.  The proof of Theorem \ref{r} is complete.

\section{Generalization}
\begin{Thm}\label{gr} Suppose $d$ is a positive integer. For each $n \in N$, suppose $L_n:=(L_{n1}, L_{n2}, \ldots, L_{nd})$ is an element of $N^d$, and suppose $X^{(n)}:=\left( X_k^{(n)}, k \in B(L_n) \right)$  is an array of random variables such that for each $k \in B(L_n)$,  $EX_{k}^{(n)}=0$ and  $E \left( X_{k}^{(n)} \right)^2<\infty$, and for at least one $k \in B(L_n)$, $E \left( X_{k}^{(n)} \right)^2>0$.  Suppose also that the mixing assumptions \eqref{e1} and
\begin{equation}\label{e1.1.0}\lim_{m\rightarrow \infty}\rho'(m)<1 \eeq
hold, where for each $m \in \N$, 
\[ \rho'(m):=\sup_{n \in N} \rho'(X^{(n)}, m).\]
For each $n \in \N$, define the random sum $S \left( X^{(n)}, L_n \right):=\sum_{k \in B(L_n)} X_k^{(n)}$ and define the quantity $\sigma_n^2:= E \left( S \left( X^{(n)}, L_n \right) \right)^2$. 
Suppose there exists a positive constant $C$ such that for every $n \in \N$ and every nonempty set $ S \in B(L_n)$, 
\begin{equation}\label{e2-1}
E \left( \sum_{ k \in S} X_k^{(n)} \right)^2 \geq C \cdot \sum_{k \in S} E \left( X_k^{(n)} \right)^2.
\end{equation}
Suppose the Lindeberg condition \eqref{e2} holds. 
Then \[\sigma_n^{-1}S(X^{(n)}, L_n)\Rightarrow
N(0, 1) \text{ as } n\rightarrow\infty.\] 
\end{Thm}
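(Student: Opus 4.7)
The plan is to reduce Theorem \ref{gr} to Theorem \ref{r} by a Bernstein-style blocking argument, in which the hypothesis $\rho'(\cdot,1)<1$ required by Theorem \ref{r} is recovered at the level of block sums. Fix $m_0\in\N$ with $\rho'(m_0)<1$. Combining Theorem \ref{br0} applied with $j=m_0$ and the lower bound \eqref{e2-1}, one obtains constants $0<c\leq c'<\infty$ (independent of $n$) such that
\[ c\sum_{k\in S}E(X_k^{(n)})^{2}\leq E\left(\sum_{k\in S}X_k^{(n)}\right)^{2}\leq c'\sum_{k\in S}E(X_k^{(n)})^{2} \]
for every $n$ and every finite $S\subseteq B(L_n)$. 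In particular $\sigma_n^2>0$, and by rescaling we may assume $\sigma_n^2=1$ throughout.

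Choose a sequence $p_n\uparrow\infty$ (growth rate to be fixed below) and set $s_n:=p_n+m_0$. For each shift $\delta\in\{0,1,\ldots,s_n-1\}^d$, tile $B(L_n)$ with ``big blocks'' of side $p_n$ separated by ``gaps'' of width $m_0$ in every coordinate direction, starting at offset $\delta$. A pigeon-hole argument (each $k\in B(L_n)$ falls into a gap for exactly $s_n^d-p_n^d$ of the $s_n^d$ shifts) produces a shift $\delta_n^*$ whose gap set $G_n$ satisfies
\[ \sum_{k\in G_n}E(X_k^{(n)})^{2}\leq\left(1-(p_n/s_n)^d\right)\sum_{k\in B(L_n)}E(X_k^{(n)})^{2}=O(1/p_n). \]
Let $Y_a^{(n)}$ denote the sum of the $X_k^{(n)}$ over the $a$-th big block (under the shift $\delta_n^*$), and set $Z^{(n)}:=S(X^{(n)},L_n)-\sum_a Y_a^{(n)}$. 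The upper bound in the first display combined with the estimate just obtained gives $E(Z^{(n)})^2=o(1)$, hence $Z^{(n)}\to 0$ in $L^2$, and $\tau_n^{2}:=E\left(\sum_a Y_a^{(n)}\right)^{2}\to 1$.

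The block field $(Y_a^{(n)})_a$ inherits the mixing hypotheses needed for Theorem \ref{r}: two big blocks whose block-indices differ by $1$ in some coordinate direction are separated by at least $m_0+1$ original positions in that direction, so $\sup_n\rho'(Y^{(n)},1)\leq\rho'(m_0)<1$, and similarly $\sup_n\alpha(Y^{(n)},\ell)\to 0$ as $\ell\to\infty$. The remaining technical point is the Lindeberg condition for the block field. As in Section 5, pick $\varepsilon_n\downarrow 0$ satisfying the Lindeberg condition for $X^{(n)}$ at level $\varepsilon_n$, split $X_k^{(n)}=X^{'(n)}_k+X^{''(n)}_k$, and correspondingly write $Y_a^{(n)}=Y^{'(n)}_a+Y^{''(n)}_a$. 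The two-sided variance bound gives $\sum_a E(Y^{''(n)}_a)^{2}\leq c'\sum_k E(X^{''(n)}_k)^{2}=o(1)$, while $\|Y^{'(n)}_a\|_\infty\leq 2\varepsilon_n p_n^d$. Taking $p_n$ to grow slowly enough that $\varepsilon_n p_n^d=o(1)$ (e.g.\ $p_n:=\lfloor\varepsilon_n^{-1/(2d)}\rfloor$) makes $\|Y^{'(n)}_a\|_\infty=o(\tau_n)$ uniformly in $a$, so for each fixed $\varepsilon>0$ the inclusion $\{|Y_a^{(n)}|>\varepsilon\tau_n\}\subseteq\{|Y^{''(n)}_a|>\varepsilon\tau_n/2\}$ holds for all large $n$; Markov's inequality applied together with $(Y_a^{(n)})^{2}\leq 2(Y^{'(n)}_a)^{2}+2(Y^{''(n)}_a)^{2}$ and the bounds just obtained then yields the Lindeberg condition at the block level.

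Applying Theorem \ref{r} to $(Y_a^{(n)})_a$ gives $\tau_n^{-1}\sum_a Y_a^{(n)}\Rightarrow N(0,1)$; combined with $\tau_n\to 1$ and $Z^{(n)}\to 0$ in probability, Slutsky's theorem yields $\sigma_n^{-1}S(X^{(n)},L_n)\Rightarrow N(0,1)$. The principal obstacle is the joint choice of $p_n$ and $\varepsilon_n$: $p_n$ must grow fast enough that the $m_0$-wide gaps contribute a vanishing fraction of the total variance, yet slowly enough that the truncated block sums remain uniformly $o(\tau_n)$. The shift-averaging trick for the gap and the two-sided variance bound supplied by \eqref{e2-1} together with Theorem \ref{br0} are precisely what make these two constraints compatible.
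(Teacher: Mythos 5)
Your proof is correct, but it takes a genuinely different route from the paper's. The paper disposes of Theorem \ref{gr} by re-running the entire induction of Sections 3--5, with the two-sided variance inequality of Theorem \ref{br1} (which needs $\rho'(\cdot,1)<1$) replaced throughout by the combination of the upper bound of Theorem \ref{br0} (valid once $\rho'(m_0)<1$ for some $m_0$) and the assumed lower bound \eqref{e2-1}; this is why the paper remarks that \eqref{e2-1} is really only needed for certain special ``rectangles'' (the slices and sub-boxes arising in the induction). You instead perform a single spatial decimation: big blocks of side $p_n$ separated by gaps of width $m_0$, a shift-averaging argument to make the gaps carry an $O(1/p_n)$ fraction of the total variance, and then an appeal to the already-proved Theorem \ref{r} for the block field, whose $\rho'(\cdot,1)$ coefficient is controlled by $\rho'(m_0)<1$ because distinct blocks are separated by at least $m_0+1$ in every coordinate where their indices differ. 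The delicate points --- the inherited $\alpha$- and $\rho'$-mixing of the block field, the coupling of $p_n$ to the truncation level $\varepsilon_n$ so that $\varepsilon_n p_n^d\to 0$ while $p_n\to\infty$, and the transfer of the Lindeberg condition to the blocks via the $Y'/Y''$ splitting --- are all handled soundly. Your approach buys two things: it avoids repeating the induction, and it invokes \eqref{e2-1} only for $S=B(L_n)$ itself (to get $\sum_k E(X_k^{(n)})^2 \ll \sigma_n^2$, which drives both the gap estimate and $\tau_n\to 1$), so it in fact establishes the theorem under a formally weaker lower-bound hypothesis than either \eqref{e2-1} or the paper's special-rectangles variant. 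Two minor points to tidy: take $\varepsilon_n<1$ so that $p_n=\lfloor\varepsilon_n^{-1/(2d)}\rfloor\ge 1$, and note explicitly that Theorem \ref{br0} applies to the truncated field $X''^{(n)}$ because $\rho'(X''^{(n)},m_0)\le\rho'(X^{(n)},m_0)$, the $X''$-variables being measurable functions of the corresponding $X$-variables.
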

For $d=1$, this result was proved by Peligrad (\cite{MP}, Theorem 2.1), with \eqref{e2-1} replaced by a weaker assumption. The proof of Theorem \ref{gr} again involves induction  on the dimension $d$, and is just a slight modification of the argument in  Sections 3, 4, and 5 for Theorem \ref{r}. In essence, in place of \eqref{e1.1} and Theorem \ref{br1}, one uses \eqref{e1.1.0}, Theorem \ref{br0}, and \eqref{e2-1}.

In fact, to make that argument work smoothly, it suffices to have a weaker version of \eqref{e2-1} in which, for a given $n \in \N$, the sets $S \subseteq B(L_n)$ are restricted to certain special ``rectangles" of the form $S=S_1\times S_2 \times \ldots \times S_d$ where for each $ j \in \{ 1, 2, \ldots, d\}$, the set $S_j$ either is $\{1, 2, \ldots, L_{n_j} \}$ or is $\{k\}$ for some $k \in \{1, 2, \ldots, L_{n_j} \}$.



\bibliographystyle{spmpsci}      
\bibliography{References}   

%
%

\end{document}